\numberwithin{equation}{section}
\definecolor{astral}{RGB}{46,116,181}
\DeclareMathAlphabet{\mathpzc}{OT1}{pzc}{m}{it}
\DeclareFontFamily{OT1}{pzc}{}
\DeclareFontShape{OT1}{pzc}{m}{it}{<-> s * [0.900] pzcmi7t}{}
\DeclareMathAlphabet{\mathpzc}{OT1}{pzc}{m}{it}
\newlength{\dhatheight}
\DeclareMathAlphabet\mathbfcal{OMS}{cmsy}{b}{n}
\definecolor{darkslategray}{rgb}{0.18, 0.31, 0.31}
\definecolor{warmblack}{rgb}{0.0, 0.26, 0.26}
\def\BState{\State\hskip-\ALG@thistlm}
\newtheorem{theorem}{Theorem}[section]
\newtheorem{lemma}[theorem]{Lemma}
\theoremstyle{definition}
\newtheorem{definition}{Definition}[section]
\newtheorem{remark}{Remark}[section]
\newtheorem{example}{Example}[section]
\journal{}
\begin{document}

\begin{frontmatter}

\title{Reverse order law for NDMPI of dual matrices and its applications}

\vspace{-.4cm}

 \author{Tikesh Verma$^{1a}$, Amit Kumar$^{2b}$, Debasisha Mishra $^{3a}$ }
  \address{

                   $^a$Department of Mathematics,\\
                        National Institute of Technology Raipur, India.\\
                        $^b$Department of Mathematics\\
                        Galgotias University, Greater Noida, Uttar Pradesh, India.
                        \\email$^1$: rprtikesh@gmail.com\\  
                        email$^2$:amitdhull513@gmail.com\\
                        email$^2$:dmishra@nitrr.ac.in\\

                        }
\vspace{-2cm}

\begin{abstract}
This manuscript establishes several sufficient conditions for the validity of both the reverse order law and forward order law for NDMPI. Additionally, some characterization of the reverse order law of the  NDMPI is obtained.  We also explore the applications of the reverse order law within this framework. Finally, we demonstrate the additivity of the NDMPI, supported by illustrative examples.

\noindent

\end{abstract}

 \begin{keyword} Moore-Penrose inverse; Dual matrix; Reverse order law; Forward order law.
{\bf Mathematics subject classifications: 15B05, 15A24.}
\end{keyword}

\end{frontmatter}

\newpage
\section{Introduction}

A {\it dual number} is represented as $ \hat{a} = a_s + a_d \epsilon $, where $a_s \in \mathbb{R} $ is the {\it standard} part of $\hat{a}$, $ a_d \in \mathbb{R}$ is the {\it dual} part of $\hat{a}$, and $ \epsilon $ is an infinitesimal unit such that $ \epsilon^2 = 0 $. Dual numbers were introduced by William Clifford \cite{cliford:1873} in 1873, and these numbers form a commutative ring. If $a_{s}$ and $a_{d}$ are complex numbers, then $\hat{a}$ is called a {\it dual complex number}.  The dual complex numbers are utilized to find numerical solutions of dual polynomial equations \cite{cheng:1996}. In addition, dual numbers and dual complex numbers have a wide range of applications in robotics, kinematic analysis, and rigid body motion (see \cite{fischer:2017, ulrich:2016, wang:2012}).\par
A matrix is called a {\it dual matrix (dual complex matrix)} if its entries are dual numbers (dual complex numbers).
Several researchers established the mathematical properties and applications of dual complex matrices. In 1987, Gu and Luh \cite{Gu:1987} considered the inverse of a dual complex matrix $\hat{A}=A_{s}+\hat{A}_{d}\epsilon \in \mathbb{D}\mathbb{C}^{n \times n}$ as $\hat{A}^{-1}=A_{s}^{-1}-A_{s}^{-1}A_{d}A_{s}^{-1}\epsilon$. Pennestrì and Stefanelli \cite{pennestri:2007} discussed the {\it Moore-Penrose dual generalized inverse (MPDGI)} of $\hat{A}=A_{s}+A_{d}\epsilon \in \mathbb{D}\mathbb{C}^{m \times n}$ and is defined as follows: $\hat{A}^{P}=A_{s}^{\dag}-A_{s}^{\dag}A_{d}A_{s}^{\dag}\epsilon$, where $A_{s}^{\dag}$ reprents the Moore-Penrose inverse of complex matrix $A_{s}$. In 2012, Angeles \cite{angeles:2012} studied the applications of the MPDGI in kinematic synthesis, followed by Pennestrì {\it et al.} \cite{pennestri:2018} in 2018. The MPDGI of a dual complex matrix always exists; however, it may not satisfy all the Moore-Penrose conditions. In 2018, De Falco {\it et al.} \cite{de:2018} discussed the {\it dual Moore-Penrose generalized inverse (DMPGI)}, which satisfies all the Moore-Penrose conditions. It is well known that the Moore-Penrose generalized inverse exists for all matrices, but the DMPGI  may not be exist. It was shown by Udwadia {\it et al.} \cite{udwadia:2020} that there are infinitely many dual matrices for which the DMPGI does not exist. Udwadia \cite{udwadia:2021} further obtained the equivalent conditions under which the DMPGI of dual matrices exist. In 2021, Wang \cite{wang:2021} also studied necessary and sufficient conditions for the existence of the DMPGI for dual matrices. Qi { \it et al.} \cite{qi:2022} presented the eigenvalue decomposition of the dual complex Hermitian matrix and the singular value decomposition of the dual complex matrix. Very recently, Cui and  Qi \cite{cui:2024} modified the first Moore-Penrose condition via the singular value decomposition of dual complex matrices and then demonstrates that every dual complex matrix has a unique {\it new dual Moore-Penrose inverse (NDMPI)}. They have also applied the NDMPI of dual complex matrices to obtain the minimum norm least squares solution for the dual linear system of equations. \par

The equality \((AB)^{-1} = B^{-1}A^{-1}\) is known as the reverse order law for for invertible matrices, while \((AB)^{-1} = A^{-1}B^{-1}\) represents the forward order law. The reverse order law always true for invertible matrices; however, the forward order law does not hold for invertible matrices. It's important to note that these laws do not hold for various generalized inverses of matrices. The reverse order law of generalized inverses plays a significant role in theoretical and numerical computations in areas such as singular matrix problems, ill-posed problems, and optimization problems (see \cite{ben:1974, golub:1996}). Many research articles focused on identifying sufficient or equivalent conditions under which both the reverse and forward order laws hold (see \cite{barwick:1974, castro:2018, greville:1966, shinozaki:1979}) for matrices. For example, In 1966, Greville \cite{greville:1966} first obtained some sufficient conditions under which the reverse order law holds for the Moore-Penrose inverse. Dinčić and Djordjević \cite{dincic:2013} extended the reverse order law for the Moore-Penrose inverse of bounded linear operators. Recently, Castro-González and Hartwig \cite{castro:2018} provided expressions for the reverse and forward order laws of the Moore-Penrose inverse of matrices.  But, the notion of reverse and forward order laws for dual complex matrices is not discussed yet. Motivated by this, we investigate the reverse and forward order law for the NDMPI of dual complex matrices.\par
 The main contributions of the manuscript are summed up as follows: The manuscript is organized as follows. In Section \ref{sec2}, we review some preliminary results for dual complex matrices. Section \ref{sec3} is divided into two subsections. In the first subsection, we present various sufficient conditions for the reverse and forward order laws of the NDMPI are established. In the second subsection, we present an application of the reverse order law to compute the NDMPI of a certain class of dual complex matrices.\par

\section{Preliminaries}{\label{sec2}}
In this section, we first briefly explain some of the terminologies used in this article. The set of real numbers, complex numbers, dual numbers, and dual complex numbers are denoted by $\mathbb{R}$, $\mathbb{C}$, $\mathbb{D}$, $\mathbb{D}\mathbb{C}$. A dual number is called appreciable if its standard part is nonzero; otherwise, it is called infinitesimal. We recall the total order $~``\leq"$ over $\mathbb{D}$ defined by Qi {\it et al.} \cite{qil:2022}. Suppose that $\hat{a}=a_{s}+a_{d}\epsilon, \hat{b}=b_{s}+b_{d}\epsilon \in \mathbb{D}$. Then, 
\begin{enumerate}
    \item[(i)] $\hat{a}=\hat{b}$ if $a_{s}=b_{s}$ and $a_{d}=b_{d}$,
    \item[(ii)]  $\hat{a}<\hat{b}$ if $a_{s}<b_{s}$, or$a_{d}<b_{d}$,
    \item[(iii)] $\hat{a} \leq \hat{b}$ if $a_{s}<b_{s}$, or$a_{d} \leq b_{d}$.
\end{enumerate}
In particular, a dual number $\hat{a}=a_{s}+a_{d}\epsilon \in \mathbb{D}$ is positive, nonnegative, negative and nonpositive if $\hat{a}>0$, $\hat{a} \geq$, $\hat{a}<0$, and $\hat{a} \leq 0$, respectively.  For a dual complex number $a=a_{s}+a_{d}\epsilon$, its conjugate is defined as $\hat{a}^*=\bar{a}_{s}+\bar{a}_{d}\epsilon$, where $\bar{a}_{s}$ and $\bar{a}_{d}$ represent the complex conjugates of $a_{s}$ and $a_{d}$, respectively. We denote the set of all complex matrices of order $m \times n$ by $\mathbb{C}^{m \times n}$. For two complex matrices $A_{s}$, $A_{d} \in \mathbb{C}^{m \times n}$, $\hat{A}=A_{s}+A_{d}\epsilon \in \mathbb{D}\mathbb{C}^{m \times n}$ represents a dual complex matrix, where $\mathbb{D}\mathbb{C}^{m \times n}$ is the set of $m \times n$ dual complex matrices. The rank of $A \in \mathbb{C}^{m \times n}$ is denoted by $rank(A)$. $A^*=(\bar{a}_{ij})^T$ and $\hat{A}^*=A_{s}^*+A_{d}^*\epsilon$ denote the conjugate transpose of a complex matrix and dual complex matrix, respectively. A dual complex matrix is Hermitian if $\hat{A}=\hat{A}^*$,  and unitary if $\hat{A}^*\hat{A}=\hat{A}\hat{A}^*=I_{n}$, where $I_{n}$ is $n \times n$ identity matrix. We begin this section by recalling the singular value decomposition of a dual complex matrix.

\begin{theorem} {\rm (\cite[Theorem 5.2]{qi:2022})}\\
    Let $\hat{A} \in \mathbb{D}\mathbb{C}^{m \times n}$. Then, there exist dual complex unitary matrices $\hat{U} \in \mathbb{D}\mathbb{C}^{m \times m}$ and $\hat{V} \in \mathbb{D}\mathbb{C}^{n \times n}$ such that  $$\hat{A}=\hat{U}\hat{\Sigma}\hat{V}^*=\hat{U}\begin{bmatrix}
        \hat{\Sigma_{t}} & O\\
        O & O
    \end{bmatrix}\hat{V}^*,$$
  where $ \hat{\Sigma_{t}} \in \mathbb{D}^{t \times t}$ is a dual diagonal matrix of the form $$\hat{\Sigma_{t}}=diag(\hat{\mu}_{1},\dots,\hat{\mu}_{r},\hat{\mu}_{r+1},\dots,\hat{\mu}_{t}),$$
  $r\leq t \leq \text{minimum}\{m,n\}$, $\hat{\mu}_{1} \geq \dots \geq \hat{\mu}_{r}$ are positive appreciable dual numbers and $\hat{\mu}_{r+1} \geq \dots \geq \hat{\mu}_{t}$ are positive infinitesimal dual numbers. Counting possible multiplicities of the diagonal entries, the form of $\hat{\Sigma_{t}}$ is unique. 
\end{theorem}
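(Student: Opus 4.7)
The plan is to bootstrap the dual SVD from the classical complex SVD of the standard part. First I would apply the standard SVD to $A_{s}$, producing unitaries $U_{s}, V_{s}$ with $A_{s} = U_{s}\Sigma_{s}V_{s}^{*}$, where $\Sigma_{s} = \mathrm{diag}(D,\,0)$ and $D = \mathrm{diag}(\sigma_{1}, \ldots, \sigma_{r})$ collects the positive singular values of $A_{s}$. Conjugating $\hat{A}$ by these yields
\[
U_{s}^{*}\hat{A}V_{s} = \Sigma_{s} + B\epsilon, \qquad B = U_{s}^{*} A_{d} V_{s} = \begin{bmatrix} B_{11} & B_{12} \\ B_{21} & B_{22}\end{bmatrix},
\]
partitioned conformally with $\Sigma_{s}$. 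The problem thus reduces to diagonalizing $B$ by a further dual-unitary conjugation that leaves $\Sigma_{s}$ in the standard part intact.

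Next I would parameterize the residual refinement as $\hat{U}_{1} = I + E\epsilon$ and $\hat{V}_{1} = I + F\epsilon$; these are dual unitary exactly when $E$ and $F$ are skew-Hermitian. Expanding modulo $\epsilon^{2} = 0$ gives $\hat{U}_{1}^{*}(\Sigma_{s} + B\epsilon)\hat{V}_{1} = \Sigma_{s} + (B - E\Sigma_{s} + \Sigma_{s}F)\epsilon$. Writing $E, F$ in $2 \times 2$ blocks conformal with $\Sigma_{s}$, the off-diagonal pieces of the correction are cleared by $F_{12} = -D^{-1}B_{12}$ and $E_{12} = -D^{-1}B_{21}^{*}$, where the invertibility of $D$ is essential. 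The $(2,2)$ block is not reached by $E$ or $F$ at all; for it I would apply the classical SVD to $B_{22}$ and absorb the resulting unitary factors into the trailing columns of $U_{s}$ and $V_{s}$, producing the positive infinitesimal dual singular values $\hat{\mu}_{r+1}, \ldots, \hat{\mu}_{t}$.

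The hard part will be the $(1,1)$ block, where one must render $B_{11} + D F_{11} - E_{11} D$ diagonal while keeping $E_{11}, F_{11}$ skew-Hermitian. Between subblocks of $D$ with distinct singular values $\sigma_{a} \neq \sigma_{b}$, the rectangular off-diagonal piece is annihilated entrywise by a $2 \times 2$ Sylvester-type system whose determinant equals $\sigma_{b}^{2} - \sigma_{a}^{2} \neq 0$. Inside a subblock of a repeated singular value $\sigma$, that determinant vanishes, and my plan would be to split the corresponding diagonal piece $M$ of $B_{11}$ as $M = H + K$ (Hermitian plus skew-Hermitian), cancel $K$ by the choice $F_{11}^{\mathrm{blk}} - E_{11}^{\mathrm{blk}} = -K/\sigma$, and then exploit the freedom of post-multiplying the corresponding columns of $U_{s}$ and $V_{s}$ by a common unitary $W$ (which preserves the classical SVD since all singular values in the block coincide) to unitarily diagonalize the Hermitian $H$. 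The diagonal entries produced in this step are precisely the dual increments attached to the appreciable singular values.

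Uniqueness up to multiplicity then follows from the invariance of the ingredients: the appreciable standard parts are classical singular values of $A_{s}$; the infinitesimal singular values are classical singular values of the uniquely determined matrix $B_{22}$; and within each repeated-standard-value subspace the dual increments are the eigenvalues of the canonical Hermitian matrix $H$ associated with $A_{d}$, all independent of the construction. A final reordering within equal-standard-part groups yields the descending arrangement required in the statement.
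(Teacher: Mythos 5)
This statement is imported by the paper as a preliminary (it is \cite[Theorem 5.2]{qi:2022}) and is given no proof here, so there is no in-paper argument to compare against; I can only assess your construction on its own terms. Your route --- take the classical SVD of $A_{s}$, reduce to $\Sigma_{s}+B\epsilon$, and then clear $B$ by first-order dual-unitary corrections $I+E\epsilon$, $I+F\epsilon$ with $E,F$ skew-Hermitian --- is sound and is the natural perturbative proof. The block computations check out: the $(1,2)$ and $(2,1)$ blocks are killed using $D^{-1}$; the $(2,2)$ block is untouched by $E,F$ and must be handled by a classical SVD of $B_{22}$ absorbed into the trailing columns, yielding the positive infinitesimal singular values; and in the $(1,1)$ block the off-diagonal entries between distinct singular values are removed by the $2\times 2$ system with determinant $\pm(\sigma_a^{2}-\sigma_b^{2})\neq 0$, while within a repeated-value block only the skew-Hermitian part of $M$ is reachable by $\sigma(F^{\mathrm{blk}}-E^{\mathrm{blk}})$, so the Hermitian part must be diagonalized by the common-unitary freedom $W$, exactly as you say. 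Three small points deserve tightening: (a) the absorptions into $U_{s},V_{s}$ (the $(2,2)$ SVD and the block unitaries $W$) alter the other blocks of $B$, so you should perform them before choosing $E,F$, or remark that the off-diagonal clearing is unaffected; (b) $B_{22}$ is not "uniquely determined" but only determined up to left and right unitary factors coming from the freedom in the trailing columns of $U_{s},V_{s}$ --- which is enough, since only its singular values enter; and (c) for uniqueness you must argue that \emph{any} dual SVD produces these invariants, which requires writing arbitrary dual unitaries as $U_{s}(I+E\epsilon)$ and rerunning the block computation, not merely observing that your particular construction is canonical. None of these is a fatal gap.
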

We can also write $$\hat{\Sigma}=\begin{bmatrix}
    \hat{\Sigma_{t}} & O\\
    O & O
\end{bmatrix}=\begin{bmatrix}
    \hat{\Sigma_{1}} & O\\
    O & \hat{\Sigma_{2}}
\end{bmatrix},$$
where $\hat{\Sigma_{1}} \in \mathbb{D}^{r \times r}$, $\hat{\Sigma_{1}}=diag(\hat{\mu}_{1},\dots,\hat{\mu}_{r})$ and $\hat{\Sigma_{2}}=diag(\hat{\mu}_{r+1},\dots,\hat{\mu}_{t},0,\dots,0)$. Further, the essential part and nonessential part of a matrix $\hat{A} \in \mathbb{D}\mathbb{C}^{m \times n}$ via the representation of $\hat{\Sigma}$  are defined as 
$$\hat{A}_{e}=\hat{U}\begin{bmatrix}
   \hat{\Sigma_{1}} & O\\
    O & O
\end{bmatrix}\hat{V}^*=\hat{U}\begin{bmatrix}
    \Sigma_{1s}+\Sigma_{1d}\epsilon & O\\
    O & O
\end{bmatrix}\hat{V}^*~\text{and}~\hat{A}_{n}=\hat{U}\begin{bmatrix}
    O & O\\
    O & \hat{\Sigma_{2}}
\end{bmatrix}\hat{V}^*=\hat{U}\begin{bmatrix}
    O & O\\
    O & \Sigma_{2d}
\end{bmatrix}\hat{V}^*\epsilon.$$
The Moore–Penrose inverse of a matrix celebrated its centenary in 2020. The manuscript by E H Moore \cite{moore:1920} that provides the first definition of the notion the Moore–Penrose inverse
 did not attract much attention. After thirty five years later, unaware
 of the work done by Moore, Sir Roger Penrose \cite{penrose:1955} provided an equivalent definition of the same concept. Richard Rado \cite{rado:1956} recognized that both definitions referred to the same concept in 1956. The Penrose conditions, consisting of four matrix equations, outline the definition of the new dual Moore-Penrose inverse of a dual complex matrix below. 
\begin{definition}{\rm (\cite[Definition 4.1]{cui:2024})}\label{def2.1}\\
    Let $\hat{A} \in \mathbb{D}\mathbb{C}^{m \times n}$. Then, the matrix $\hat{X} \in \mathbb{D}\mathbb{C}^{n \times m}$ is called the NDMPI of $\hat{A}$ if it satisfies the following four equations:
    \begin{align}
     \hat{A}\hat{X}\hat{A}&=\hat{A}_{e},\label{ptv2.1} \\
     \hat{X}\hat{A}\hat{X}&=\hat{X},  \label{pt2.2}\\
     (\hat{A}\hat{X})^*&=\hat{A}\hat{X},\label{pt2.3} \\ 
     (\hat{X}\hat{A})^*&=\hat{X}\hat{A}. \label{ptv2.4}
     \end{align} It is denoted by $\hat{A}^N$.
\end{definition}
Li and Wang \cite{Li:2023} replaced the first Moore-Penrose condition with 
\begin{equation}\label{ptv2.5}
    \hat{A}^*\hat{A}\hat{X}\hat{A}\hat{A}^*=\hat{A}^*\hat{A}\hat{A}^*
\end{equation} and defined weakly dual MP generalized inverse as
$$\hat{X}=(\hat{A}^*\hat{A})^\dag\hat{A}^*=\hat{A}^*(\hat{A}\hat{A}^*)^\dag.$$
 For any $\hat{A} \in \mathbb{D}\mathbb{C}^{m \times n}$, the set of all dual complex matrices $\hat{X}\in\mathbb{D}\mathbb{C}^{n\times m}$ which satisfies any of the combinations of the above four dual complex matrix equations \eqref{ptv2.1}-\eqref{ptv2.4} is denoted as $\hat{A}\{i,j,k,l\}$, where $i,j,k,l \in \{1,2,3,4\}$. For example, $\hat{A}\{1\}$ denotes the set of all solutions $\hat{X}$ of dual complex matrix equation (1). Such an $\hat{X}$ which satisfies equation (1) is called an inner inverse of $\hat{A}$, and is denoted by  $\hat{A}^{(1)}$. Similarly, $\hat{A}\{1,3\}$ denotes the set of all solutions of the first and third dual complex matrix equations. We denote a member of $\hat{A}\{1,3\}$ as $\hat{A}^{(1,3)}$.

The next result shows that conditions \eqref{ptv2.1} and \eqref{ptv2.5} are equivalent.

\begin{theorem}{\rm (\cite[Theorem 4.3]{cui:2024})}\label{th2.2}
    Let $\hat{A} \in \mathbb{D}\mathbb{C}^{m \times n}$. Then, under conditions \eqref{pt2.3} and \eqref{ptv2.4}, the following are equivalent:
     \begin{enumerate}[(i)]
         \item $\hat{A}^*\hat{A}\hat{X}\hat{A}\hat{A}^*=\hat{A}^*\hat{A}\hat{A}^*$,
         \item $\hat{A}\hat{X}\hat{A}=\hat{A}_{e}$.
     \end{enumerate}
    
\end{theorem}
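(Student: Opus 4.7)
The plan is to reduce both implications to a block computation via the dual singular value decomposition of Theorem 1. Write $\hat{A}=\hat{U}\hat{\Sigma}\hat{V}^*$ with $\hat{\Sigma}=\begin{bmatrix}\hat{\Sigma}_{1}& O\\ O&\hat{\Sigma}_{2}\end{bmatrix}$, noting that $\hat{\Sigma}_{1}$ is an invertible appreciable dual diagonal matrix while $\hat{\Sigma}_{2}=\Sigma_{2d}\epsilon$ is purely infinitesimal, so $\hat{\Sigma}_{2}^{2}=O$. Since $\hat{U}$ and $\hat{V}$ are unitary, I will parametrise $\hat{X}=\hat{V}\hat{B}\hat{U}^*$ and partition $\hat{B}=\begin{bmatrix}B_{11}&B_{12}\\ B_{21}&B_{22}\end{bmatrix}$ conformably with $\hat{\Sigma}$.

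For the implication (ii)$\Rightarrow$(i), I would left- and right-multiply $\hat{A}\hat{X}\hat{A}=\hat{A}_{e}$ by $\hat{A}^{*}$ to obtain $\hat{A}^{*}\hat{A}\hat{X}\hat{A}\hat{A}^{*}=\hat{A}^{*}\hat{A}_{e}\hat{A}^{*}$; it then remains to verify the identity $\hat{A}^{*}\hat{A}_{e}\hat{A}^{*}=\hat{A}^{*}\hat{A}\hat{A}^{*}$. In SVD coordinates, both sides collapse to $\hat{V}\begin{bmatrix}\hat{\Sigma}_{1}^{3}&O\\ O&O\end{bmatrix}\hat{U}^{*}$, since they differ only in a $(2,2)$-block of the form $\hat{\Sigma}_{2}^{3}$, which vanishes identically thanks to $\epsilon^{2}=0$. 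Notice that this direction actually does not make use of \eqref{pt2.3} or \eqref{ptv2.4}.

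For the implication (i)$\Rightarrow$(ii), substituting the SVD into (i) reduces it to $\hat{\Sigma}^{*}\hat{\Sigma}\hat{B}\hat{\Sigma}\hat{\Sigma}^{*}=\hat{\Sigma}^{*}\hat{\Sigma}\hat{\Sigma}^{*}$. Because $\hat{\Sigma}_{2}^{2}=O$, only the $(1,1)$-block carries content and yields $\hat{\Sigma}_{1}^{2}B_{11}\hat{\Sigma}_{1}^{2}=\hat{\Sigma}_{1}^{3}$, hence $B_{11}=\hat{\Sigma}_{1}^{-1}$ by the invertibility of $\hat{\Sigma}_{1}$. To pin down the off-diagonal blocks I would invoke \eqref{pt2.3} and \eqref{ptv2.4}, which, after the same unitary change of coordinates, say that $\hat{\Sigma}\hat{B}$ and $\hat{B}\hat{\Sigma}$ are both Hermitian. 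Writing out the off-diagonal blocks of these Hermiticity identities and splitting into standard and dual parts (using $\hat{\Sigma}_{2}=\Sigma_{2d}\epsilon$ and $\epsilon^{2}=0$) will first force the standard part of $B_{12}$ to vanish, and then, by bootstrapping that back into the second Hermiticity identity, give $B_{21}=O$. Assembling these facts shows $\hat{\Sigma}\hat{B}\hat{\Sigma}=\begin{bmatrix}\hat{\Sigma}_{1}&O\\ O&O\end{bmatrix}$, which is precisely (ii) after conjugating by $\hat{U}$ and $\hat{V}^{*}$.

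The main obstacle lies in this second direction: because multiplication by $\hat{\Sigma}_{2}$ takes values in the infinitesimal ideal $(\epsilon)$, condition (i) alone is blind to the off-diagonal and $(2,2)$-entries of $\hat{B}$. The delicate step is to argue that the Hermiticity conditions \eqref{pt2.3}--\eqref{ptv2.4} still fix enough of these entries, in the correct standard/dual components, for $\hat{\Sigma}\hat{B}\hat{\Sigma}$ to match the block form of the essential part. The bookkeeping of which components of $B_{12}$ and $B_{21}$ live in the standard part and which in the $\epsilon$-part is where the argument must be carried out carefully, but once done the result is immediate.
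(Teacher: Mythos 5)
The paper never proves this statement: it is imported verbatim from Cui and Qi's Theorem 4.3, and the only in-paper commentary is Remark \ref{re2.1}, which notes that the equivalence also holds with the Hermiticity hypotheses placed on $\hat{A}_{e}\hat{X}$ and $\hat{X}\hat{A}_{e}$. So there is no in-paper argument to compare against, and your SVD block computation must be judged on its own terms; it is correct, and indeed it is the natural route given that the NDMPI is itself defined through the dual SVD. For (ii)$\Rightarrow$(i), the identity $\hat{A}^{*}\hat{A}_{e}\hat{A}^{*}=\hat{A}^{*}\hat{A}\hat{A}^{*}$ holds because the two sides differ only by $\hat{\Sigma}_{2}^{*}\hat{\Sigma}_{2}\hat{\Sigma}_{2}^{*}$, which vanishes since every entry of $\hat{\Sigma}_{2}$ is a multiple of $\epsilon$; your observation that \eqref{pt2.3} and \eqref{ptv2.4} are not used in this direction is also accurate. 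For (i)$\Rightarrow$(ii), the steps you flag as delicate do close up. Condition (i) collapses to $\hat{\Sigma}_{1}^{2}B_{11}\hat{\Sigma}_{1}^{2}=\hat{\Sigma}_{1}^{3}$, giving $B_{11}=\hat{\Sigma}_{1}^{-1}$ and hence the correct $(1,1)$ block $\hat{\Sigma}_{1}B_{11}\hat{\Sigma}_{1}=\hat{\Sigma}_{1}$; the $(2,2)$ block $\hat{\Sigma}_{2}B_{22}\hat{\Sigma}_{2}$ dies automatically. The $(2,1)$ block of the Hermiticity of $\hat{\Sigma}\hat{B}$ reads $\hat{\Sigma}_{2}B_{21}=B_{12}^{*}\hat{\Sigma}_{1}$, whose standard part forces $B_{12,s}=O$ because $\Sigma_{1s}$ is invertible; then $B_{12}\hat{\Sigma}_{2}=O$ outright (a product of two multiples of $\epsilon$), so the $(1,2)$ block of the Hermiticity of $\hat{B}\hat{\Sigma}$, namely $B_{12}\hat{\Sigma}_{2}=\hat{\Sigma}_{1}B_{21}^{*}$, yields $B_{21}=O$ exactly as you predicted by bootstrapping. (In fact only $B_{21,s}=O$ is needed, since $\hat{\Sigma}_{2}B_{21}\hat{\Sigma}_{1}$ sees only the standard part of $B_{21}$, but the stronger conclusion does no harm.) Hence $\hat{\Sigma}\hat{B}\hat{\Sigma}$ has exactly the block form of $\hat{A}_{e}$, and the argument is complete; the only work left is to write out the bookkeeping you already describe.
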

\begin{remark}\label{re2.1}
The above result can also be proved by taking $(\hat{A}_{e}\hat{X})^*=\hat{A}_{e}\hat{X}$ and $(\hat{X}\hat{A}_{e})^*=\hat{X}\hat{A}_{e}$. 
\end{remark}The following result provides a method for finding the NDMPI.
\begin{theorem} {\rm (\cite[Theorem 4.1]{cui:2024})}\\
    Let $\hat{A} \in \mathbb{D}\mathbb{C}^{m \times n}$ and  $$\hat{A}=\hat{U}\begin{bmatrix}
        \hat{\Sigma_{1}} & O\\
        O & \hat{\Sigma_{2}}
    \end{bmatrix}\hat{V}^*,$$ be the SVD of $\hat{A}$. Then, the NDMPI of $\hat{A}$ is 
    $$\hat{A}^N=\hat{V}\begin{bmatrix}
        \hat{\Sigma_{1}}^{-1} & O\\
        O & O
    \end{bmatrix}\hat{U}^*.$$
\end{theorem}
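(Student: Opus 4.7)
The plan is to verify the four Penrose-type conditions \eqref{ptv2.1}--\eqref{ptv2.4} in Definition \ref{def2.1} directly, using block-matrix arithmetic together with the unitarity of $\hat{U}$ and $\hat{V}$. First I would observe that since $\hat{\Sigma_1}=\mathrm{diag}(\hat{\mu}_1,\dots,\hat{\mu}_r)$ is a dual diagonal matrix whose diagonal entries are \emph{positive appreciable} dual numbers, each $\hat{\mu}_i$ has nonzero standard part and is therefore invertible in $\mathbb{D}\mathbb{C}$; hence $\hat{\Sigma_1}^{-1}$ makes sense and the candidate
\[
\hat{X}:=\hat{V}\begin{bmatrix}\hat{\Sigma_1}^{-1} & O\\ O & O\end{bmatrix}\hat{U}^{*}
\]
is a well-defined dual complex matrix. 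By contrast $\hat{\Sigma_2}$ has only infinitesimal entries and is \emph{not} invertible, which motivates why only the $\hat{\Sigma_1}$-block is inverted.

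Next I would compute the two idempotents $\hat{A}\hat{X}$ and $\hat{X}\hat{A}$. Using $\hat{U}^{*}\hat{U}=I$ and $\hat{V}^{*}\hat{V}=I$ from unitarity, a routine block multiplication gives
\[
\hat{A}\hat{X}=\hat{U}\begin{bmatrix}I_r & O\\ O & O\end{bmatrix}\hat{U}^{*},\qquad \hat{X}\hat{A}=\hat{V}\begin{bmatrix}I_r & O\\ O & O\end{bmatrix}\hat{V}^{*}.
\]
Both are conjugated diagonal projectors, so they are clearly Hermitian, which yields \eqref{pt2.3} and \eqref{ptv2.4} immediately.

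To verify \eqref{ptv2.1}, I would multiply again to get
\[
\hat{A}\hat{X}\hat{A}=\hat{U}\begin{bmatrix}I_r & O\\ O & O\end{bmatrix}\begin{bmatrix}\hat{\Sigma_1} & O\\ O & \hat{\Sigma_2}\end{bmatrix}\hat{V}^{*}=\hat{U}\begin{bmatrix}\hat{\Sigma_1} & O\\ O & O\end{bmatrix}\hat{V}^{*},
\]
which coincides with the essential part $\hat{A}_e$ as defined via the SVD in the preliminaries. Condition \eqref{pt2.2} is obtained in the same style: $\hat{X}\hat{A}\hat{X}$ collapses, through the orthogonality of $\hat{V}$, to $\hat{X}$ itself. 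Finally, the uniqueness of the NDMPI (already guaranteed by Cui--Qi \cite{cui:2024} as recalled in the introduction) forces $\hat{A}^N=\hat{X}$.

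The computation is essentially bookkeeping; the only conceptual point requiring care is the existence and correct placement of $\hat{\Sigma_1}^{-1}$. The hard part, if any, is ensuring that the block split $\hat{\Sigma}=\mathrm{diag}(\hat{\Sigma_1},\hat{\Sigma_2})$ aligns consistently between $\hat{A}$, $\hat{X}$, and their products so that the infinitesimal block $\hat{\Sigma_2}$ is annihilated on both sides of \eqref{ptv2.1}, producing $\hat{A}_e$ rather than $\hat{A}$ itself, in agreement with Theorem~\ref{th2.2} and Remark~\ref{re2.1}.
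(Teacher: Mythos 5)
This statement is quoted in the paper as a preliminary result (Theorem 4.1 of Cui--Qi \cite{cui:2024}); the paper itself gives no proof, so there is nothing internal to compare against. Your direct verification of the four conditions \eqref{ptv2.1}--\eqref{ptv2.4} is correct: the block computations of $\hat{A}\hat{X}$, $\hat{X}\hat{A}$, $\hat{A}\hat{X}\hat{A}=\hat{A}_{e}$ and $\hat{X}\hat{A}\hat{X}=\hat{X}$ all check out, the invertibility of $\hat{\Sigma_{1}}$ is correctly justified by the appreciability of its diagonal entries, and the appeal to uniqueness of the NDMPI legitimately closes the argument; this is the standard route and matches what the cited source does.
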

Now, we recall some properties of the NDMPI.
\begin{theorem}{\rm (\cite[Theorem 3.2]{be:2024})}\label{ptv2.3}
    Let $\hat{A} \in \mathbb{D}\mathbb{C}^{m \times n}$ . Then, 
    \begin{enumerate}
        \item $\hat{A}^*_{e}=\hat{A}^*A\hat{A}^N=\hat{A}^N\hat{A}\hat{A}^*$.
        \item $\hat{A}_{e}=(\hat{A}^*)^N\hat{A}^*\hat{A}=\hat{A}\hat{A}^*(\hat{A}^*)^N$.
        \item $\hat{A}^N=(\hat{A}^*\hat{A})^{N}\hat{A}^*=\hat{A}^*(\hat{A}\hat{A}^*)^N$.
    \end{enumerate}
\end{theorem}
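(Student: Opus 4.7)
My plan is to reduce each of the three identities to a direct block computation using the SVD of $\hat A$ from the first theorem of this section together with the closed-form expression for $\hat A^N$ just recalled; in fact, parts (1) and (2) will not require the SVD at all. Fix $\hat A=\hat U\,\mathrm{diag}(\hat\Sigma_1,\hat\Sigma_2)\,\hat V^*$ and $\hat A^N=\hat V\,\mathrm{diag}(\hat\Sigma_1^{-1},O)\,\hat U^*$, where $\hat\Sigma_1$ is appreciable and invertible while $\hat\Sigma_2$ is diagonal with infinitesimal entries. Since $\hat\Sigma_1,\hat\Sigma_2$ are real-diagonal and hence self-adjoint, one also reads off $\hat A^*=\hat V\,\mathrm{diag}(\hat\Sigma_1,\hat\Sigma_2)\,\hat U^*$ and $\hat A_e=\hat U\,\mathrm{diag}(\hat\Sigma_1,O)\,\hat V^*$.

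For (1), the four NDMPI conditions of Definition \ref{def2.1} suffice. Using Hermiticity \eqref{pt2.3} of $\hat A\hat A^N$ and then \eqref{ptv2.1},
\[
\hat A^*\hat A\hat A^N=\hat A^*(\hat A\hat A^N)^*=(\hat A\hat A^N\hat A)^*=\hat A_e^*,
\]
and symmetrically, invoking \eqref{ptv2.4} first, $\hat A^N\hat A\hat A^*=\hat A^*(\hat A^N)^*\hat A^*=(\hat A\hat A^N\hat A)^*=\hat A_e^*$. For (2), I would apply (1) to $\hat A^*$ in place of $\hat A$: the SVD of $\hat A^*$ has the same diagonal block structure with $\hat U,\hat V$ interchanged, so $((\hat A^*)_e)^*=\hat A_e$, and (1) applied to $\hat A^*$ therefore reads $\hat A_e=\hat A\hat A^*(\hat A^*)^N=(\hat A^*)^N\hat A^*\hat A$, which is exactly (2).

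Part (3) is where the dual structure genuinely enters. The crucial observation is that every nonzero entry of $\hat\Sigma_2$ is of the form $c\epsilon$, so $\hat\Sigma_2^2=O$. Consequently,
\[
\hat A^*\hat A=\hat V\,\mathrm{diag}(\hat\Sigma_1^2,O)\,\hat V^*,
\]
which is already in SVD form, and the NDMPI formula yields $(\hat A^*\hat A)^N=\hat V\,\mathrm{diag}(\hat\Sigma_1^{-2},O)\,\hat V^*$. A direct block multiplication then produces $(\hat A^*\hat A)^N\hat A^*=\hat V\,\mathrm{diag}(\hat\Sigma_1^{-1},O)\,\hat U^*=\hat A^N$, and the companion identity follows analogously from $\hat A\hat A^*=\hat U\,\mathrm{diag}(\hat\Sigma_1^2,O)\,\hat U^*$. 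The only substantive step throughout is the cancellation $\hat\Sigma_2^2=O$ and the recognition that the resulting factorization is a bona fide SVD so that the formula for the NDMPI applies verbatim; once that is in hand, everything else is block-matrix bookkeeping.
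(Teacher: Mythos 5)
Your argument is correct. Note that the paper itself offers no proof of this theorem: it is imported verbatim from the submitted reference \cite{be:2024}, so there is no in-text argument to compare against. Judged on its own terms, your proof is sound and economical. Parts (1) and (2) use only the four defining equations \eqref{ptv2.1}--\eqref{ptv2.4}: the identities $\hat{A}^*\hat{A}\hat{A}^N=(\hat{A}\hat{A}^N\hat{A})^*=\hat{A}_e^*$ and $\hat{A}^N\hat{A}\hat{A}^*=(\hat{A}\hat{A}^N\hat{A})^*$ follow exactly as you say from the Hermiticity of $\hat{A}\hat{A}^N$ and $\hat{A}^N\hat{A}$, and the passage to $\hat{A}^*$ in part (2) is legitimate because the dual singular values are real dual numbers, so $(\hat{A}^*)_e=(\hat{A}_e)^*$. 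Part (3) is where care is needed, and you correctly isolate the two substantive points: the cancellation $\hat{\Sigma}_2^2=O$ (valid since every entry of $\hat{\Sigma}_2$ is infinitesimal and $\epsilon^2=0$), and the verification that $\hat{V}\,\mathrm{diag}(\hat{\Sigma}_1^2,O)\,\hat{V}^*$ is again a legitimate SVD so that the closed-form expression for the NDMPI applies; the latter holds because squaring preserves the order on positive appreciable dual numbers. Your computation is in fact the same style the authors use themselves in the proof of Lemma \ref{mptv4.1}(i), so the argument sits naturally alongside the rest of the paper.
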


Wang \cite{wang:2023} introduced the Definition of $r$-column full rank dual complex matrix, $r$-row full rank dual complex matrix and dual $r$-rank decomposition of a dual complex matrix. The same is recalled in the following definitions.

\begin{definition}{\rm (\cite[Definition 1]{wang:2023})}
    Let $\hat{A} = A_{s} + A_{d}\epsilon \in  \mathbb{D}\mathbb{C}^{m \times r}$, $\hat{B} = B_{s}+B_{d}\epsilon \in  \mathbb{D}\mathbb{C}^{r \times n}$. If the standard part $A_{s}$ of $\hat{A}$ is a full column rank matrix, then we call $\hat{A}$ $r$-column full
rank dual complex matrix; if the standard part $B_{s}$ of $\hat{B}$ is a full row rank matrix, then we call $\hat{B}$ $r$-row full rank dual complex matrix.
\end{definition}

\begin{definition}{\rm (\cite[Definition 2]{wang:2023})}
    Let $\hat{A}=A_{s}+A_{d}\epsilon \in \mathbb{D}\mathbb{C}^{m \times n}$, $rank(A_{s})=r$ and $A_{s}=A_{1s}A_{2s}$ be full rank decomposition of $A_{s}$. If there exist, an $r$-column full rank dual complex matrix $\hat{A}_{1}=A_{1s}+A_{1d}\epsilon$ and $r$-row full rank dual complex matrix $\hat{A}_{2}=A_{2s}+A_{2d}\epsilon$, such that  $$\hat{A}=\hat{A}_{1}\hat{A}_{2},$$
    which is called dual $r$-rank decomposition of $\hat{A}$.
\end{definition}

\section{Main Results}\label{sec3}

In this section, we provide several sufficient conditions for the reverse order law of the NDMPI of a dual complex matrix. Using the singular value decomposition of a dual complex matrix, we obtain the following lemma.

\begin{lemma}\label{mptv4.1}
    Let $\hat{A}, \hat{B} \in \mathbb{D}\mathbb{C}^{m \times n}$. Then, 
    \begin{enumerate}[(i)]
        \item $\hat{A}^*\hat{A}=\hat{A}^*_{e}\hat{A}=\hat{A}^{*}\hat{A}_{e}$.
        \item $\hat{A}\hat{A}^*=\hat{A}_{e}\hat{A}^*=\hat{A}\hat{A}^*_{e}$.
        \item $\hat{A}^N\hat{A}=\hat{A}^N\hat{A}_{e}$.
        \item $\hat{A}_{e}\hat{A}^N=\hat{A}\hat{A}^N$.
        \item If $\hat{B}$ is invertible, then $\hat{A}^*\hat{A}\hat{B}=\hat{A}^*(\hat{A}\hat{B})_{e}$ and  $\hat{A}_{e}\hat{B}=\hat{A}\hat{A}^N(\hat{A}\hat{B})_{e}$.
    \end{enumerate}
\end{lemma}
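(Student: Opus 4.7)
The natural framework is the SVD: write $\hat{A} = \hat{A}_e + \hat{A}_n$ with $\hat{A}_e = \hat{U}\bigl[\begin{smallmatrix}\hat{\Sigma}_1 & O \\ O & O\end{smallmatrix}\bigr]\hat{V}^*$ and $\hat{A}_n = \hat{U}\bigl[\begin{smallmatrix}O & O \\ O & \Sigma_{2d}\end{smallmatrix}\bigr]\hat{V}^*\,\epsilon$, which is purely infinitesimal. Direct block multiplication in SVD coordinates yields the bookkeeping identities $\hat{A}_e^*\hat{A}_n = \hat{A}_n^*\hat{A}_e = \hat{A}_n^*\hat{A}_n = O$ and, symmetrically, $\hat{A}_e\hat{A}_n^* = \hat{A}_n\hat{A}_e^* = \hat{A}_n\hat{A}_n^* = O$; the block structure of $\hat{A}^N = \hat{V}\bigl[\begin{smallmatrix}\hat{\Sigma}_1^{-1} & O \\ O & O\end{smallmatrix}\bigr]\hat{U}^*$ also gives $\hat{A}^N\hat{A}_n = O$ and $\hat{A}_n\hat{A}^N = O$. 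These collapses rely only on the fact that $\hat{\Sigma}_2$ is of order $\epsilon$ (so $\hat{\Sigma}_2^2 = O$) and on the orthogonality between the two diagonal blocks.

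With these identities in hand, parts (i)--(iv) reduce to one-line expansions of $\hat{A} = \hat{A}_e + \hat{A}_n$. For (i), both $\hat{A}_e^*\hat{A}$ and $\hat{A}^*\hat{A}_e$ collapse to $\hat{A}_e^*\hat{A}_e$, and so does $\hat{A}^*\hat{A}$ since every cross term involves $\hat{A}_n$ on one side. Part (ii) is the symmetric statement. Parts (iii) and (iv) are immediate: $\hat{A}^N\hat{A} = \hat{A}^N\hat{A}_e + \hat{A}^N\hat{A}_n = \hat{A}^N\hat{A}_e$ and $\hat{A}\hat{A}^N = \hat{A}_e\hat{A}^N + \hat{A}_n\hat{A}^N = \hat{A}_e\hat{A}^N$.

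For part (v), I would first record two auxiliary facts valid for any $\hat{C} \in \mathbb{D}\mathbb{C}^{m\times n}$: $\hat{C}^*\hat{C}_n = O$ (immediate from the identities above applied to $\hat{C}$), and $\hat{C}\hat{C}^N\hat{C}_n = O$ (from the Penrose-type condition $\hat{C}\hat{C}^N\hat{C} = \hat{C}_e$ combined with the SVD calculation $\hat{C}\hat{C}^N\hat{C}_e = \hat{C}_e$). Applying the first to $\hat{C} = \hat{A}\hat{B}$ gives $\hat{B}^*\hat{A}^*(\hat{A}\hat{B})_n = O$, and since $\hat{B}$---hence $\hat{B}^*$---is invertible, cancellation yields $\hat{A}^*(\hat{A}\hat{B})_n = O$. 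Splitting $\hat{A}\hat{B} = (\hat{A}\hat{B})_e + (\hat{A}\hat{B})_n$ then gives $\hat{A}^*\hat{A}\hat{B} = \hat{A}^*(\hat{A}\hat{B})_e$.

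The second equation in (v) is the subtle step. Using $\hat{A}_e = \hat{A}\hat{A}^N\hat{A}$ reduces the claim to $\hat{A}\hat{A}^N(\hat{A}\hat{B})_n = O$. The standard part of $\hat{A}\hat{A}^N$ is the orthogonal projector $A_sA_s^\dag$ onto $\text{range}(A_s)$, while the standard part of $(\hat{A}\hat{B})(\hat{A}\hat{B})^N$ projects onto $\text{range}(A_sB_s)$; invertibility of $B_s$ forces $\text{range}(A_sB_s) = \text{range}(A_s)$, so the two standard parts coincide. Because $(\hat{A}\hat{B})_n$ is purely infinitesimal, only standard parts act nontrivially on it, and therefore $\hat{A}\hat{A}^N(\hat{A}\hat{B})_n = (\hat{A}\hat{B})(\hat{A}\hat{B})^N(\hat{A}\hat{B})_n = O$ by the second auxiliary fact. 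This projector-matching argument---transferring a property of $\hat{A}\hat{B}$ back to $\hat{A}$ via the invertibility of $\hat{B}$---is the heart of (v) and the main technical point I would expect to justify most carefully.
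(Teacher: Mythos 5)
Your proof is correct, and parts (i)--(iv) together with the first identity in (v) follow essentially the paper's own route: the paper also computes directly in SVD coordinates (your orthogonality identities $\hat{A}_e^*\hat{A}_n=\hat{A}_n^*\hat{A}_n=O$, etc.\ are just a repackaging of the block multiplication, using $\epsilon^2=0$ to kill the $\hat{\Sigma}_2$ block), and it obtains $\hat{A}^*\hat{A}\hat{B}=\hat{A}^*(\hat{A}\hat{B})_e$ exactly as you do, by applying (i) to $\hat{A}\hat{B}$ and cancelling the invertible factor $\hat{B}^*$. Where you genuinely diverge is the second identity in (v). The paper simply pre-multiplies $\hat{A}^*\hat{A}\hat{B}=\hat{A}^*(\hat{A}\hat{B})_e$ by $(\hat{A}^*)^N$ and invokes Theorem \ref{ptv2.3}: the left side becomes $\hat{A}_e\hat{B}$ via $(\hat{A}^*)^N\hat{A}^*\hat{A}=\hat{A}_e$, and the right side becomes $\hat{A}\hat{A}^N(\hat{A}\hat{B})_e$ via $(\hat{A}^*)^N\hat{A}^*=(\hat{A}\hat{A}^N)^*=\hat{A}\hat{A}^N$. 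Your alternative --- reducing the claim to $\hat{A}\hat{A}^N(\hat{A}\hat{B})_n=O$ and then matching the standard parts of the projectors $\hat{A}\hat{A}^N$ and $(\hat{A}\hat{B})(\hat{A}\hat{B})^N$ (both equal to the orthogonal projector onto $\mathrm{range}(A_s)=\mathrm{range}(A_sB_s)$, which suffices because $(\hat{A}\hat{B})_n$ is purely infinitesimal) --- is a valid and self-contained argument, at the cost of being longer; the paper's version buys brevity by leaning on the already-established identities of Theorem \ref{ptv2.3} and on $(\hat{A}^*)^N=(\hat{A}^N)^*$. Both derivations are sound.
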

\begin{proof}
   We will show (i), (iii) and (v). The rest assertions are  similar to other ones. Let $\hat{A}=\hat{U}\begin{bmatrix}
        \hat{\Sigma_{1}} & O\\
        O & \hat{\Sigma_{2}}
    \end{bmatrix}\hat{V}^*$ be the SVD of $\hat{A}$. Then, 
    $$\hat{A}_{e}=\hat{U}\begin{bmatrix}
        \hat{\Sigma_{1}} & O\\
        O & O
    \end{bmatrix}\hat{V}^*.$$
        (i) $\hat{A}^*\hat{A}=\hat{V}\begin{bmatrix}
        \hat{\Sigma_{1}^*} & O\\
        O & {\Sigma_{2d}^*}\epsilon
    \end{bmatrix}\hat{U}^*\hat{U}\begin{bmatrix}
        \hat{\Sigma_{1}} & O\\
        O & \Sigma_{2d}\epsilon
    \end{bmatrix}\hat{V}^*=\hat{V}\begin{bmatrix}
        \hat{\Sigma_{1}}^2 & O\\
        O & O
    \end{bmatrix}\hat{V}^*=\hat{A}^*_{e}\hat{A}=\hat{A}^*\hat{A}_{e}.$\\
    
    (iii) By the Definition of the NDMPI, we have
    $$\hat{A}^N\hat{A}=\hat{V}\begin{bmatrix}
        \hat{\Sigma^{-1}_{1}} & O\\
        O & O
    \end{bmatrix}\hat{U}^*\hat{U}\begin{bmatrix}
        \hat{\Sigma_{1}} & O\\
        O & \hat{\Sigma_{2}}
    \end{bmatrix}\hat{V}^*=\hat{V}\begin{bmatrix}
        I & O\\
        O & O
    \end{bmatrix}\hat{V}^*=\hat{A}^N\hat{A}_{e}.$$
    (v) Using (i), we have  $(\hat{A}\hat{B})^*(\hat{A}\hat{B})=(\hat{A}\hat{B})^*(\hat{A}\hat{B})_{e}$. Since $\hat{B}$ is invertible, we get 
    \begin{equation}\label{eq3.1}
     \hat{A}^*\hat{A}\hat{B}=\hat{A}^*(\hat{A}\hat{B})_{e}.
     \end{equation}
     Pre-multiplying \eqref{eq3.1} by $(A^*)^N$ and using Theorem \ref{ptv2.3}, we get 
$$\hat{A}_{e}\hat{B}=\hat{A}\hat{A}^N(\hat{A}\hat{B})_{e}.$$
    
\end{proof}
 
Next, we will present a result that is essential for proving the main result of this section.

  \begin{theorem} \label{mptv4.6}
      Let $\hat{A} \in \mathbb{D}\mathbb{C}^{m \times q}$ and $\hat{B} \in \mathbb{D}\mathbb{C}^{q \times n}$ be dual matrices. If $\hat{A}^N\hat{A}\hat{B}\hat{B}^*\hat{A}^*=\hat{B}\hat{B}^*\hat{A}^*_{e}$, $\hat{B}\hat{B}^N\hat{A}^*\hat{A}\hat{B}=\hat{A}^*\hat{A}\hat{B}_{e}$, then \begin{enumerate}
       \item $\hat{A}^N\hat{A}\hat{B}\hat{B}^*=\hat{B}\hat{B}^*\hat{A}^N\hat{A}$.
       \item $\hat{A}_{e}\hat{B}\hat{B}^*\hat{A}^*=\hat{A}\hat{B}\hat{B}^*\hat{A}^*_{e}$.
       \item $\hat{B}\hat{B}^N\hat{A}^*\hat{A}=\hat{A}^*\hat{A}\hat{B}^N\hat{B}$.

          \item $\hat{B}^*_{e}\hat{A}^*\hat{A}\hat{B}=\hat{B}^*\hat{A}^*\hat{A}\hat{B}_{e}.$
         
          \end{enumerate}
  \end{theorem}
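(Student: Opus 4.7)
The plan is to take the two hypotheses
\[
\hat{A}^N\hat{A}\hat{B}\hat{B}^*\hat{A}^*=\hat{B}\hat{B}^*\hat{A}^*_{e}\qquad(\star)
\]
\[
\hat{B}\hat{B}^N\hat{A}^*\hat{A}\hat{B}=\hat{A}^*\hat{A}\hat{B}_{e}\qquad(\star\star)
\]
and massage them by well-chosen left/right multiplications and conjugate transpositions, exploiting the fact that $\hat{A}^N\hat{A}$, $\hat{A}\hat{A}^N$, $\hat{B}^N\hat{B}$, $\hat{B}\hat{B}^N$ are all Hermitian (Penrose conditions \eqref{pt2.3}--\eqref{ptv2.4}), together with the essential-part identities from Theorem \ref{ptv2.3} and Lemma \ref{mptv4.1}.

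The easy assertions are (2) and (4). For (2), pre-multiply $(\star)$ by $\hat{A}$: the left-hand side becomes $\hat{A}\hat{A}^N\hat{A}\,\hat{B}\hat{B}^*\hat{A}^*$, and Penrose equation \eqref{ptv2.1} collapses $\hat{A}\hat{A}^N\hat{A}$ to $\hat{A}_e$, yielding $\hat{A}_e\hat{B}\hat{B}^*\hat{A}^*=\hat{A}\hat{B}\hat{B}^*\hat{A}^*_e$. For (4), pre-multiply $(\star\star)$ by $\hat{B}^*$ and use the identity $\hat{B}^*\hat{B}\hat{B}^N=\hat{B}^*_{e}$ (which is the conjugate transpose of Theorem \ref{ptv2.3}(1) applied to $\hat{B}$), so the left side becomes $\hat{B}^*_e\hat{A}^*\hat{A}\hat{B}$ while the right side becomes $\hat{B}^*\hat{A}^*\hat{A}\hat{B}_e$.

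Assertions (1) and (3) need an extra symmetrisation step, and this is where the main work lies. For (1), first take the conjugate transpose of $(\star)$; using that $\hat{A}^N\hat{A}$ is Hermitian one obtains $\hat{A}\hat{B}\hat{B}^*\hat{A}^N\hat{A}=\hat{A}_e\hat{B}\hat{B}^*$. Pre-multiplying by $\hat{A}^N$ and applying Lemma \ref{mptv4.1}(iii), namely $\hat{A}^N\hat{A}_e=\hat{A}^N\hat{A}$, yields the sandwich identity
\[
\hat{A}^N\hat{A}\hat{B}\hat{B}^*\hat{A}^N\hat{A}=\hat{A}^N\hat{A}\hat{B}\hat{B}^*.
\]
The left-hand side is Hermitian (being a product whose outer and middle factors $\hat{A}^N\hat{A}$ and $\hat{B}\hat{B}^*$ are all Hermitian), so taking conjugate transpose once more converts the right-hand side into $\hat{B}\hat{B}^*\hat{A}^N\hat{A}$. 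Stringing the two equalities together gives $\hat{A}^N\hat{A}\hat{B}\hat{B}^*=\hat{B}\hat{B}^*\hat{A}^N\hat{A}$. Assertion (3) is proved in exactly the symmetric way: post-multiply $(\star\star)$ by $\hat{B}^N$, use Lemma \ref{mptv4.1}(iv) ($\hat{B}_e\hat{B}^N=\hat{B}\hat{B}^N$) to obtain $\hat{B}\hat{B}^N\hat{A}^*\hat{A}\hat{B}\hat{B}^N=\hat{A}^*\hat{A}\hat{B}\hat{B}^N$, and then take conjugate transpose of this Hermitian sandwich.

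The main obstacle is the extra symmetrisation needed for (1) and (3): a single pre- or post-multiplication is not enough to move $\hat{A}^*$ out of $(\star)$ and replace it by $\hat{A}$ on the other side. The trick is to build a Hermitian sandwich of the form $PMP$ with $P$ a Hermitian projector and $M$ a Hermitian factor, so that the projector can be absorbed on one side or the other. Provided the essential-part identities $\hat{A}^*_e=\hat{A}^N\hat{A}\hat{A}^*$ and $\hat{A}^N\hat{A}_e=\hat{A}^N\hat{A}$ are used cleanly, everything else is routine bookkeeping.
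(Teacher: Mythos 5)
Your proof is correct and follows essentially the same route as the paper's: both arguments hinge on turning each hypothesis into a Hermitian sandwich $P M P$ (with $P=\hat{A}^N\hat{A}$ or $\hat{B}\hat{B}^N$ and $M=\hat{B}\hat{B}^*$ or $\hat{A}^*\hat{A}$) so that Hermiticity forces the commutation relations (1) and (3), with (2) and (4) following by one-sided multiplication and the essential-part identities. The only differences are cosmetic (you take a conjugate transpose and pre-multiply by $\hat{A}^N$ where the paper post-multiplies by $(\hat{A}^*)^N$, and you get (2) directly from the hypothesis rather than from (1)); note also that your derivation of (3) yields $\hat{B}\hat{B}^N\hat{A}^*\hat{A}=\hat{A}^*\hat{A}\hat{B}\hat{B}^N$, which is the dimensionally consistent form of the statement.
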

  \begin{proof}
      Suppose that $\hat{A}^N\hat{A}\hat{B}\hat{B}^*\hat{A}^*=\hat{B}\hat{B}^*\hat{A}^*_{e}$, which on post-multiplying by $(\hat{A}^*)^N$ gives 
      \begin{equation}\label{mptv4.7}
          \hat{A}^N\hat{A}\hat{B}\hat{B}^*\hat{A}^N\hat{A}=\hat{B}\hat{B}^*\hat{A}^N\hat{A}.
\end{equation}
      Since the left side of \eqref{mptv4.7} is Hermitian, we have \begin{equation}\label{eq3.2}
      \hat{A}^N\hat{A}\hat{B}\hat{B}^*=\hat{B}\hat{B}^*\hat{A}^N\hat{A}.
          \end{equation}
       Recall that ${A}^N\hat{A}\hat{A}^*=A_{e}^*$. Pre and post-multiplying \eqref{eq3.2} by $\hat{A}$ and $\hat{A}^*$, we get 
       $$\hat{A}_{e}\hat{B}\hat{B}^*\hat{A}^*=\hat{A}\hat{B}\hat{B}^*\hat{A}^*_{e}.$$
       Similarly, one can prove $\hat{B}\hat{B}^N\hat{A}^*\hat{A}=\hat{A}^*\hat{A}\hat{B}^N\hat{B}$ and $\hat{B}^*_{e}\hat{A}^*\hat{A}\hat{B}=\hat{B}^*\hat{A}^*\hat{A}\hat{B}_{e}.$
  \end{proof}

The main result of this section provided below.

  \begin{theorem}\label{mtv3.5}
       Let $\hat{A} \in \mathbb{D}\mathbb{C}^{m \times q}$ and $\hat{B} \in \mathbb{D}\mathbb{C}^{q \times n}$ be dual complex matrices. If 
      \begin{equation}\label{mptv4.8}
          \hat{A}^N\hat{A}\hat{B}\hat{B}^*\hat{A}^*=\hat{B}\hat{B}^*\hat{A}^*_{e} 
           \end{equation} and \begin{equation}\label{mptv4.9}
               \hat{B}\hat{B}^N\hat{A}^*\hat{A}\hat{B}=\hat{A}^*\hat{A}\hat{B}_{e},
               \end{equation} then $(\hat{A}\hat{B})^N=\hat{B}^N\hat{A}^N$.
         \end{theorem}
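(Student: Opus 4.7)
The plan is to verify that $\hat{X} := \hat{B}^N\hat{A}^N$ satisfies the four defining equations \eqref{ptv2.1}--\eqref{ptv2.4} of the NDMPI of $\hat{A}\hat{B}$ given in Definition~\ref{def2.1}; uniqueness of the NDMPI then yields $(\hat{A}\hat{B})^N = \hat{B}^N\hat{A}^N$. The main toolkit consists of Theorem~\ref{mptv4.6}, which from the hypotheses \eqref{mptv4.8} and \eqref{mptv4.9} extracts the commutation identity $\hat{A}^N\hat{A}\hat{B}\hat{B}^* = \hat{B}\hat{B}^*\hat{A}^N\hat{A}$ and (by an analogous argument post-multiplied by $\hat{B}^N$ and symmetrised) the commutation $\hat{A}^*\hat{A}\hat{B}\hat{B}^N = \hat{B}\hat{B}^N\hat{A}^*\hat{A}$, together with the essential-part identities in parts (2) and (4) of that theorem; Theorem~\ref{ptv2.3}, which converts between essential parts and NDMPI expressions via $\hat{A}^N = \hat{A}^*(\hat{A}\hat{A}^*)^N$ and $\hat{A}^*_e = \hat{A}^*\hat{A}\hat{A}^N$; and Lemma~\ref{mptv4.1}, especially the absorption identities $\hat{A}^*\hat{A} = \hat{A}^*_e\hat{A} = \hat{A}^*\hat{A}_e$ and $\hat{A}\hat{A}^* = \hat{A}_e\hat{A}^* = \hat{A}\hat{A}^*_e$, which let one swallow essential-part subscripts placed adjacent to $\hat{A}$ or $\hat{A}^*$.

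For the symmetry conditions \eqref{pt2.3} and \eqref{ptv2.4}, I would first rewrite $\hat{A}^N = \hat{A}^*(\hat{A}\hat{A}^*)^N$ (and symmetrically $\hat{B}^N = (\hat{B}\hat{B}^*)^N\hat{B}$), so that
\[
\hat{A}\hat{B}\hat{B}^N\hat{A}^N \;=\; \hat{A}\hat{B}\hat{B}^N\hat{A}^*(\hat{A}\hat{A}^*)^N.
\]
The Hermiticity of this expression reduces to showing that the (manifestly Hermitian) matrix $\hat{A}\hat{B}\hat{B}^N\hat{A}^*$ commutes with $(\hat{A}\hat{A}^*)^N$, which, by the spectral/SVD calculus for Hermitian dual matrices, is equivalent to its commuting with $\hat{A}\hat{A}^*$; the latter is a one-line consequence of $\hat{A}^*\hat{A}\hat{B}\hat{B}^N = \hat{B}\hat{B}^N\hat{A}^*\hat{A}$ together with associativity. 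An analogous argument using $\hat{A}^N\hat{A}\hat{B}\hat{B}^* = \hat{B}\hat{B}^*\hat{A}^N\hat{A}$ handles \eqref{ptv2.4}. Condition \eqref{pt2.2}, namely $\hat{B}^N\hat{A}^N\hat{A}\hat{B}\hat{B}^N\hat{A}^N = \hat{B}^N\hat{A}^N$, is then dispatched by the same commutation moves together with the NDMPI idempotencies $\hat{A}^N\hat{A}\hat{A}^N = \hat{A}^N$ and $\hat{B}^N\hat{B}\hat{B}^N = \hat{B}^N$.

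The principal obstacle is condition \eqref{ptv2.1}: $(\hat{A}\hat{B})\hat{X}(\hat{A}\hat{B}) = (\hat{A}\hat{B})_e$, because $(\hat{A}\hat{B})_e$ does not telescope as a product of the essential parts of $\hat{A}$ and $\hat{B}$. Once \eqref{pt2.3} and \eqref{ptv2.4} are in hand, Theorem~\ref{th2.2} together with Remark~\ref{re2.1} reduces \eqref{ptv2.1} to verifying the equivalent sandwich identity
\[
(\hat{A}\hat{B})^*(\hat{A}\hat{B})\,\hat{B}^N\hat{A}^N\,(\hat{A}\hat{B})(\hat{A}\hat{B})^* \;=\; (\hat{A}\hat{B})^*(\hat{A}\hat{B})(\hat{A}\hat{B})^*.
\]
Expanding the left side to $\hat{B}^*\hat{A}^*\hat{A}\hat{B}\hat{B}^N\hat{A}^N\hat{A}\hat{B}\hat{B}^*\hat{A}^*$, I would (a) apply the conjugate transpose of \eqref{mptv4.9}, namely $\hat{B}^*\hat{A}^*\hat{A}\hat{B}\hat{B}^N = \hat{B}^*_e\hat{A}^*\hat{A}$, to the first five factors; (b) use Lemma~\ref{mptv4.1}(i) to collapse $\hat{A}^*\hat{A}\hat{A}^N\hat{A} = \hat{A}^*_e\hat{A} = \hat{A}^*\hat{A}$; (c) apply item (4) of Theorem~\ref{mptv4.6}, $\hat{B}^*_e\hat{A}^*\hat{A}\hat{B} = \hat{B}^*\hat{A}^*\hat{A}\hat{B}_e$, to ferry the essential-part subscript to the other end; and (d) absorb the remaining subscript via Lemma~\ref{mptv4.1}(ii), $\hat{B}_e\hat{B}^* = \hat{B}\hat{B}^*$, producing the right side.

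The delicate move is (c): both hypotheses must act in concert (channelled through Theorem~\ref{mptv4.6}) to transport the essential-part subscript from the $\hat{A}$-side to the $\hat{B}$-side of the expression, and without the equivalent formulation of \eqref{ptv2.1} furnished by Theorem~\ref{th2.2} it is not clear how one would avoid resorting to an ad hoc SVD computation.
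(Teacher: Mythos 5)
The paper's source contains no proof of Theorem~\ref{mtv3.5} (the theorem environment is followed directly by the next result), so there is no line-by-line comparison to make; but your proposal is correct in outline and deploys exactly the machinery the paper assembles for this purpose: Theorem~\ref{mptv4.6} (announced as ``essential for proving the main result''), the equivalence of Theorem~\ref{th2.2}, and the absorption identities of Lemma~\ref{mptv4.1}. Your chain for condition \eqref{ptv2.1} checks out: the conjugate transpose of \eqref{mptv4.9} gives $\hat{B}^*\hat{A}^*\hat{A}\hat{B}\hat{B}^N=\hat{B}^*_e\hat{A}^*\hat{A}$; then $\hat{A}^*\hat{A}\hat{A}^N\hat{A}=\hat{A}^*_e\hat{A}=\hat{A}^*\hat{A}$; then Theorem~\ref{mptv4.6}(4) and $\hat{B}_e\hat{B}^*=\hat{B}\hat{B}^*$ yield $(\hat{A}\hat{B})^*(\hat{A}\hat{B})(\hat{A}\hat{B})^*$; and Theorem~\ref{th2.2} legitimately converts this back to \eqref{ptv2.1} once the two symmetry conditions are in hand. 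Two points need attention. First, you silently use $\hat{B}\hat{B}^N\hat{A}^*\hat{A}=\hat{A}^*\hat{A}\hat{B}\hat{B}^N$, whereas Theorem~\ref{mptv4.6}(3) as printed has $\hat{A}^*\hat{A}\hat{B}^N\hat{B}$ on the right, which is dimensionally impossible unless $q=n$; your version is the one the argument of that theorem actually produces (post-multiply \eqref{mptv4.9} by $\hat{B}^N$ and use Hermitian symmetry of the sandwich), so you are correcting a typo, but this should be stated. Second, the step ``commutes with the Hermitian matrix $\hat{A}\hat{A}^*$ implies commutes with $(\hat{A}\hat{A}^*)^N$'' is true but is not a one-liner in the dual setting: in an eigenbasis one only obtains $m'_{ij}(\hat{\lambda}_j-\hat{\lambda}_i)=0$, and since $\mathbb{D}$ has zero divisors this does not force $m'_{ij}=0$; one must treat separately the cases where both eigenvalues are appreciable (there $\hat{\lambda}_j^N-\hat{\lambda}_i^N$ is a unit multiple of $\hat{\lambda}_j-\hat{\lambda}_i$), both infinitesimal (there $\hat{\lambda}_i^N=\hat{\lambda}_j^N=0$), or mixed (there $\hat{\lambda}_j-\hat{\lambda}_i$ is a unit, so $m'_{ij}=0$). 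With that lemma supplied, conditions \eqref{pt2.3}, \eqref{ptv2.4} and \eqref{pt2.2} go through as you describe, and uniqueness of the NDMPI completes the proof.
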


       In the next result, we modify conditions \eqref{mtv3.5} and \eqref{mptv4.8}.
      \begin{theorem}\label{mtvth3.5}
           Let $\hat{A} \in \mathbb{D}\mathbb{C}^{m \times q}$ and $\hat{B} \in \mathbb{D}\mathbb{C}^{q \times n}$ be dual complex matrices. If 
      \begin{equation}\label{mptv3.11}
          \hat{A}^N\hat{A}\hat{B}\hat{B}^*\hat{A}^*=\hat{B}\hat{B}^*\hat{A}^* 
           \end{equation} and \begin{equation}\label{mptv3.12}
               \hat{B}\hat{B}^N\hat{A}^*\hat{A}\hat{B}=\hat{A}^*\hat{A}\hat{B},
               \end{equation} then $(\hat{A}\hat{B})^N=\hat{B}^N\hat{A}^N$. 
      \end{theorem}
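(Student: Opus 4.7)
The plan is to reduce Theorem \ref{mtvth3.5} to the preceding Theorem \ref{mtv3.5} by showing that the stronger-looking hypotheses \eqref{mptv3.11} and \eqref{mptv3.12} imply the hypotheses \eqref{mptv4.8} and \eqref{mptv4.9} of Theorem \ref{mtv3.5}; once that is done, the conclusion $(\hat{A}\hat{B})^N = \hat{B}^N\hat{A}^N$ is immediate.

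For the first reduction, I would post-multiply \eqref{mptv3.11} by $(\hat{A}^*)^N$. Using the relation $(\hat{A}^*)^N = (\hat{A}^N)^*$ (which follows from symmetry of the four defining equations of the NDMPI) together with $\hat{A}^*(\hat{A}^N)^* = (\hat{A}^N\hat{A})^* = \hat{A}^N\hat{A}$ via \eqref{ptv2.4}, both sides acquire a trailing $\hat{A}^N\hat{A}$, giving
\[
\hat{A}^N\hat{A}\hat{B}\hat{B}^*\hat{A}^N\hat{A} = \hat{B}\hat{B}^*\hat{A}^N\hat{A}.
\]
The left-hand side is Hermitian, so equating the right-hand side with its conjugate transpose yields the commutation identity
\[
\hat{A}^N\hat{A}\hat{B}\hat{B}^* = \hat{B}\hat{B}^*\hat{A}^N\hat{A},
\]
which is precisely conclusion (1) of Theorem \ref{mptv4.6}. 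Post-multiplying this identity by $\hat{A}^*$ and invoking Theorem \ref{ptv2.3}(1), namely $\hat{A}^N\hat{A}\hat{A}^* = \hat{A}^*_{e}$, recovers $\hat{A}^N\hat{A}\hat{B}\hat{B}^*\hat{A}^* = \hat{B}\hat{B}^*\hat{A}^*_{e}$, which is exactly \eqref{mptv4.8}.

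The second reduction is entirely analogous. Post-multiplying \eqref{mptv3.12} by $\hat{B}^N$ produces $\hat{B}\hat{B}^N\hat{A}^*\hat{A}\hat{B}\hat{B}^N = \hat{A}^*\hat{A}\hat{B}\hat{B}^N$; Hermiticity of the left-hand side (since $\hat{B}\hat{B}^N$ is Hermitian by \eqref{pt2.3}) forces the right-hand side to be Hermitian as well, yielding $\hat{A}^*\hat{A}\hat{B}\hat{B}^N = \hat{B}\hat{B}^N\hat{A}^*\hat{A}$. Post-multiplying by $\hat{B}$ and using $\hat{B}\hat{B}^N\hat{B} = \hat{B}_{e}$ from \eqref{ptv2.1} applied to $\hat{B}$ then delivers $\hat{B}\hat{B}^N\hat{A}^*\hat{A}\hat{B} = \hat{A}^*\hat{A}\hat{B}_{e}$, which is \eqref{mptv4.9}. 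Applying Theorem \ref{mtv3.5} concludes the proof.

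The only genuinely delicate point — and the move one has to spot — is that each of \eqref{mptv3.11} and \eqref{mptv3.12} secretly encodes two pieces of information: a commutation relation (extracted by post-multiplication by an NDMPI followed by a Hermiticity argument) and the ``essential-part trimming'' supplied by Theorem \ref{ptv2.3}(1). Combining these two ingredients is exactly what converts $\hat{A}^*$ into $\hat{A}^*_{e}$, and $\hat{B}$ into $\hat{B}_{e}$, on the right-hand sides; after that, the conclusion is a direct appeal to Theorem \ref{mtv3.5}.
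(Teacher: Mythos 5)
The paper states this theorem without supplying any proof (no proof environment follows it in the source), so there is nothing to compare line by line; what can be said is that your reduction is correct and is exactly in the spirit of the surrounding arguments. Post-multiplying the first hypothesis by $(\hat{A}^*)^N=(\hat{A}^N)^*$ and using $\hat{A}^*(\hat{A}^N)^*=(\hat{A}^N\hat{A})^*=\hat{A}^N\hat{A}$ (condition \eqref{ptv2.4}) to get a Hermitian left-hand side, and then reading off the commutation relation $\hat{A}^N\hat{A}\hat{B}\hat{B}^*=\hat{B}\hat{B}^*\hat{A}^N\hat{A}$, is precisely the symmetrization trick the authors use in the proof of Theorem \ref{mptv4.6}; post-multiplying by $\hat{A}^*$ and invoking $\hat{A}^N\hat{A}\hat{A}^*=\hat{A}^*_e$ from Theorem \ref{ptv2.3} then yields \eqref{mptv4.8}, and the dual argument with $\hat{B}\hat{B}^N$ (Hermitian by \eqref{pt2.3}) and $\hat{B}\hat{B}^N\hat{B}=\hat{B}_e$ yields \eqref{mptv4.9}, so Theorem \ref{mtv3.5} applies. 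The only point worth flagging is the identity $(\hat{A}^*)^N=(\hat{A}^N)^*$, which you use without proof: it does hold, since transposing the SVD of $\hat{A}$ gives an SVD of $\hat{A}^*$ with $(\hat{A}^*)_e=(\hat{A}_e)^*$, so $(\hat{A}^N)^*$ satisfies the four defining equations for $\hat{A}^*$ and uniqueness of the NDMPI finishes it; alternatively you could bypass it entirely by post-multiplying directly by $(\hat{A}^N)^*$. With that small justification added, your proof is complete and fills a gap the paper leaves open.
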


  The converse of the above theorem is not true. An example illustrating is provided below.
     \begin{example}
          Let $\hat{A}=\begin{bmatrix}
                  1 & \epsilon 
          \end{bmatrix}$ and $\hat{B}=\begin{bmatrix}
              1 & 0\\
              0 & \epsilon
          \end{bmatrix}$. Then, 
          $(\hat{A}\hat{B})^N=\begin{bmatrix}
              1\\
              0
          \end{bmatrix}$ and 
             $\hat{B}^N\hat{A}^N=\begin{bmatrix}
              1 \\
              0 
          \end{bmatrix}$. Thus, $(\hat{A}\hat{B})^N=\hat{B}^N\hat{A}^N$ but $\hat{B}\hat{B}^N\hat{A}^*\hat{A}\hat{B}=\begin{bmatrix}
              1 & 0\\
              0 & 0
          \end{bmatrix} \neq \begin{bmatrix}
              1 & 0\\
              \epsilon & 0
          \end{bmatrix}=\hat{A}^*\hat{A}\hat{B}.$
      \end{example}
We now provide a single condition for the reverse order law instead of two.
  \begin{theorem}
       Let $\hat{A} \in \mathbb{D}\mathbb{C}^{m \times q}$ and $\hat{B} \in \mathbb{D}\mathbb{C}^{q \times n}$ be dual complex matrices. If 
       \begin{equation}\label{mptv3.10}
           \hat{A}^N\hat{A}\hat{B}\hat{B}^*\hat{A}^*\hat{A}\hat{B}\hat{B}^N=\hat{B}\hat{B}^*\hat{A}^*\hat{A},
       \end{equation}   then $(\hat{A}\hat{B})^N=\hat{B}^N\hat{A}^N$.
  \end{theorem}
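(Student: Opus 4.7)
The plan is to reduce the single hypothesis \eqref{mptv3.10} to the two conditions of Theorem~\ref{mtv3.5}, after which the conclusion is immediate. First I will observe that $P := \hat{A}^N\hat{A}$ and $Q := \hat{B}\hat{B}^N$ are Hermitian idempotents; Hermiticity follows from Penrose conditions \eqref{ptv2.4} and \eqref{pt2.3}, while idempotency follows from \eqref{ptv2.1} together with parts (iii) and (iv) of Lemma~\ref{mptv4.1}. Since \eqref{mptv3.10} has the form $PXQ = X$ with $X = \hat{B}\hat{B}^*\hat{A}^*\hat{A}$, left-multiplying by $P$ and using $P^2 = P$ forces $PX = X$; symmetrically $XQ = X$. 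Thus \eqref{mptv3.10} splits into
\begin{align*}
\hat{A}^N\hat{A}\hat{B}\hat{B}^*\hat{A}^*\hat{A} &= \hat{B}\hat{B}^*\hat{A}^*\hat{A}, \\
\hat{B}\hat{B}^*\hat{A}^*\hat{A}\hat{B}\hat{B}^N &= \hat{B}\hat{B}^*\hat{A}^*\hat{A},
\end{align*}
which I label (a) and (b).

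The central step will be to upgrade (a) to the commutation $\hat{A}^N\hat{A}\hat{B}\hat{B}^* = \hat{B}\hat{B}^*\hat{A}^N\hat{A}$. For this I rely on the identity $\hat{A}^*\hat{A}(\hat{A}^*\hat{A})^N = \hat{A}^N\hat{A}$, which comes from Theorem~\ref{ptv2.3}(3) (post-multiplied by $\hat{A}$) together with the fact that any Hermitian matrix commutes with its own NDMPI (a short consequence of Penrose conditions \eqref{pt2.3} and \eqref{ptv2.4}). Rewriting (a) as $(\hat{A}^N\hat{A}\hat{B}\hat{B}^* - \hat{B}\hat{B}^*)\hat{A}^*\hat{A} = 0$ and post-multiplying by $(\hat{A}^*\hat{A})^N$ yields $\hat{A}^N\hat{A}\hat{B}\hat{B}^*\hat{A}^N\hat{A} = \hat{B}\hat{B}^*\hat{A}^N\hat{A}$; the adjoint (using Hermiticity of $\hat{A}^N\hat{A}$ and $\hat{B}\hat{B}^*$) gives $\hat{A}^N\hat{A}\hat{B}\hat{B}^*\hat{A}^N\hat{A} = \hat{A}^N\hat{A}\hat{B}\hat{B}^*$, and the commutation follows. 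Post-multiplying by $\hat{A}^*$ and invoking $\hat{A}^N\hat{A}\hat{A}^* = \hat{A}^*_e$ from Theorem~\ref{ptv2.3}(1) produces the first hypothesis of Theorem~\ref{mtv3.5}:
$$\hat{A}^N\hat{A}\hat{B}\hat{B}^*\hat{A}^* = \hat{B}\hat{B}^*\hat{A}^*_e.$$

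Applying the same pattern to the adjoint of (b), which reads $\hat{B}\hat{B}^N\hat{A}^*\hat{A}\hat{B}\hat{B}^* = \hat{A}^*\hat{A}\hat{B}\hat{B}^*$, together with the analogous identity $\hat{B}\hat{B}^*(\hat{B}\hat{B}^*)^N = \hat{B}\hat{B}^N$ (the second form of Theorem~\ref{ptv2.3}(3), pre-multiplied by $\hat{B}$), will yield the commutation $\hat{B}\hat{B}^N\hat{A}^*\hat{A} = \hat{A}^*\hat{A}\hat{B}\hat{B}^N$. Post-multiplying by $\hat{B}$ and using $\hat{B}\hat{B}^N\hat{B} = \hat{B}_e$ (Penrose condition \eqref{ptv2.1}) then delivers the second hypothesis of Theorem~\ref{mtv3.5}, namely $\hat{B}\hat{B}^N\hat{A}^*\hat{A}\hat{B} = \hat{A}^*\hat{A}\hat{B}_e$. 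An application of Theorem~\ref{mtv3.5} then closes the proof.

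The main obstacle will be the commutation step itself. A direct post-multiplication of (a) by $\hat{A}^N$ only gives $\hat{A}^N\hat{A}\hat{B}\hat{B}^*\hat{A}^*_e = \hat{B}\hat{B}^*\hat{A}^*_e$, an identity that carries $\hat{A}^*_e$ on \emph{both} sides and is strictly weaker than what Theorem~\ref{mtv3.5} requires. The identity $\hat{A}^*\hat{A}(\hat{A}^*\hat{A})^N = \hat{A}^N\hat{A}$, applied to the Hermitian matrix $\hat{A}^*\hat{A}$, is precisely the tool that transfers the projector $\hat{A}^N\hat{A}$ across $\hat{B}\hat{B}^*$, so that after a further multiplication by $\hat{A}^*$ the required asymmetric identity emerges.
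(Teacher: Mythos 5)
Your proposal is correct, and its overall architecture coincides with the paper's: both arguments first extract from the single hypothesis \eqref{mptv3.10} the two one-sided identities $\hat{A}^N\hat{A}\hat{B}\hat{B}^*\hat{A}^*\hat{A}=\hat{B}\hat{B}^*\hat{A}^*\hat{A}$ and $\hat{B}\hat{B}^*\hat{A}^*\hat{A}\hat{B}\hat{B}^N=\hat{B}\hat{B}^*\hat{A}^*\hat{A}$ (the paper gets the first by pre-multiplying by $\hat{A}^N\hat{A}$ and comparing with \eqref{mptv3.10}, which is exactly your $PXQ=X\Rightarrow PX=X$ observation), and then funnel everything into Theorem \ref{mtv3.5}. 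Where you genuinely diverge is the step that converts $\hat{A}^N\hat{A}\hat{B}\hat{B}^*\hat{A}^*\hat{A}=\hat{B}\hat{B}^*\hat{A}^*\hat{A}$ into the hypothesis $\hat{A}^N\hat{A}\hat{B}\hat{B}^*\hat{A}^*=\hat{B}\hat{B}^*\hat{A}^*_{e}$ of Theorem \ref{mtv3.5}. The paper post-multiplies by $\hat{A}^N$, obtains $\hat{A}^N\hat{A}\hat{B}\hat{B}^*\hat{A}^*_{e}=\hat{B}\hat{B}^*\hat{A}^*_{e}$, and then strips $e$ from the left-hand factor ``by following the technique employed in the converse part of Theorem 3.4''--- a step it does not spell out (and whose source proof is not displayed). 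You instead post-multiply by $(\hat{A}^*\hat{A})^N$, use $\hat{A}^*\hat{A}(\hat{A}^*\hat{A})^N=\hat{A}^N\hat{A}$, and take adjoints to land on the full commutation $\hat{A}^N\hat{A}\hat{B}\hat{B}^*=\hat{B}\hat{B}^*\hat{A}^N\hat{A}$ (which is conclusion (1) of Theorem \ref{mptv4.6}), from which the required identity drops out by one multiplication by $\hat{A}^*$; the treatment of the $\hat{B}$-side condition is the exact mirror image. Your route is self-contained and makes explicit precisely the point the paper leaves implicit, at the small cost of invoking the facts $(\hat{H}^N)^*=(\hat{H}^*)^N$ (so that a Hermitian matrix commutes with its NDMPI) --- this needs the SVD representation and uniqueness of the NDMPI, not merely Penrose conditions \eqref{pt2.3}--\eqref{ptv2.4} as you state, but it is true and easily checked. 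No gaps otherwise.
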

  \begin{proof}
      Pre-multiplying \eqref{mptv3.10} by $\hat{A}^N\hat{A}$ gives \begin{equation}\label{mptv3.11}
          \hat{A}^N\hat{A}\hat{B}\hat{B}^*\hat{A}^*\hat{A}\hat{B}\hat{B}^N=\hat{A}^N\hat{A}\hat{B}\hat{B}^*\hat{A}^*\hat{A}.
      \end{equation}
      From \eqref{mptv3.10} and \eqref{mptv3.11}, we have $\hat{A}^N\hat{A}\hat{B}\hat{B}^*\hat{A}^*\hat{A}=\hat{B}\hat{B}^*\hat{A}^*\hat{A},$
      which on post-multiplying  by $\hat{A}^N$, we get   $$\hat{A}^N\hat{A}\hat{B}\hat{B}^*\hat{A}_{e}^*=\hat{B}\hat{B}^*\hat{A}^*_{e},$$
      which in turn implies 
      $$\hat{A}^N\hat{A}\hat{B}\hat{B}^*\hat{A}^*=\hat{B}\hat{B}^*\hat{A}^*_{e}$$
      by following the technique employed in the converse part of Theorem 3.4. Similarly, we can prove $$\hat{B}\hat{B}^N\hat{A}^*\hat{A}\hat{B}=\hat{A}^*\hat{A}\hat{B}_{e}.$$
      Thus, by Theorem \ref{mtv3.5}, we obtain $(\hat{A}\hat{B})^N=\hat{B}^N\hat{A}^N.$

  \end{proof}
   The converse of the above theorem is not true. It is illustrated by the following example.
  \begin{example}
      Consider the same example as in \ref{ex4.2}. Then, $\hat{A}^N\hat{A}\hat{B}\hat{B}^*\hat{A}^*\hat{A}\hat{B}\hat{B}^N=\begin{bmatrix}
          2 & \epsilon\\
          2\epsilon & 0
      \end{bmatrix} \neq \begin{bmatrix}
          2 & 2\epsilon\\
          \epsilon & 0
      \end{bmatrix}=\hat{B}\hat{B}^*\hat{A}^*\hat{A}$.
  \end{example}
  The relationship between the DMPGI and the NDMPI of the product of dual complex matrices is given by the following result.
  \begin{theorem}
      Let $\hat{A} \in \mathbb{D}\mathbb{C}^{m \times q}$ and $\hat{B} \in \mathbb{D}\mathbb{C}^{q \times n}$ be dual complex matrices such that $(\hat{A}\hat{B})_{e}=(\hat{A}\hat{B})$. If \begin{align}\label{mptv3.13}
          \hat{A}^N\hat{A}\hat{B}_{e}&=\hat{B}(\hat{A}\hat{B})^N\hat{A}\hat{B},\\ \label{mptv3.14}
          \hat{B}\hat{B}^N\hat{A}^*_{e}&=\hat{A}^*\hat{A}\hat{B}(\hat{A}\hat{B})^N,
      \end{align} then  $(\hat{A}\hat{B})^\dag=(\hat{A}\hat{B})^N=\hat{B}^N\hat{A}^N$.
  \end{theorem}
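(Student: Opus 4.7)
The plan is to decouple the conclusion into two independent statements---namely $(\hat{A}\hat{B})^\dag = (\hat{A}\hat{B})^N$ and $(\hat{A}\hat{B})^N = \hat{B}^N\hat{A}^N$---and reduce each to a result already proved earlier in the paper.

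For the first equality, I would appeal directly to the hypothesis $(\hat{A}\hat{B})_e = \hat{A}\hat{B}$: under this assumption the first NDMPI condition \eqref{ptv2.1} applied to $\hat{A}\hat{B}$ collapses to the classical first Penrose equation, while \eqref{pt2.2}--\eqref{ptv2.4} are already the remaining three Penrose conditions. Hence $(\hat{A}\hat{B})^N$ satisfies all four Moore--Penrose equations, and by uniqueness of the DMPGI it coincides with $(\hat{A}\hat{B})^\dag$.

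For the reverse order law, my strategy is to derive the hypotheses \eqref{mptv3.11} and \eqref{mptv3.12} of Theorem~\ref{mtvth3.5} from the weaker assumptions \eqref{mptv3.13} and \eqref{mptv3.14}, after which an appeal to that theorem finishes the argument. To obtain \eqref{mptv3.11}, I would post-multiply \eqref{mptv3.13} by $\hat{B}^*\hat{A}^*$: the left side simplifies to $\hat{A}^N\hat{A}\hat{B}\hat{B}^*\hat{A}^*$ via $\hat{B}_e\hat{B}^* = \hat{B}\hat{B}^*$ from Lemma~\ref{mptv4.1}(ii), while the right side regroups as $\hat{B}\bigl[(\hat{A}\hat{B})^N(\hat{A}\hat{B})(\hat{A}\hat{B})^*\bigr]$ in which Theorem~\ref{ptv2.3}(1) replaces the bracket by $(\hat{A}\hat{B})^*_e$. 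Combining this with the hypothesis $(\hat{A}\hat{B})_e = \hat{A}\hat{B}$ and the small observation that the conjugate transpose commutes with the essential-part operator (immediate from the SVD), one gets $(\hat{A}\hat{B})^*_e = (\hat{A}\hat{B})^*$, so the right side collapses to $\hat{B}\hat{B}^*\hat{A}^*$, matching \eqref{mptv3.11}. A symmetric argument---post-multiplying \eqref{mptv3.14} by $\hat{A}\hat{B}$, using Lemma~\ref{mptv4.1}(i) on the left and \eqref{ptv2.1} together with $(\hat{A}\hat{B})_e = \hat{A}\hat{B}$ on the right---produces \eqref{mptv3.12}.

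I do not anticipate any serious obstacle here. Once one has spotted that the correct operations are to post-multiply \eqref{mptv3.13} by $\hat{B}^*\hat{A}^*$ and \eqref{mptv3.14} by $\hat{A}\hat{B}$, every subsequent step is a direct invocation of Lemma~\ref{mptv4.1}, Theorem~\ref{ptv2.3}, or the hypothesis, and the reverse order law then drops out of Theorem~\ref{mtvth3.5}. The only subtle bookkeeping item is the auxiliary fact that $\hat{M}_e = \hat{M}$ implies $\hat{M}^*_e = \hat{M}^*$, which is merely the observation that taking the conjugate transpose swaps the two unitary factors in the SVD without altering the appreciable or infinitesimal status of any singular value.
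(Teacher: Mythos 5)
Your proposal is correct and follows essentially the same route as the paper: post-multiply \eqref{mptv3.13} by $(\hat{A}\hat{B})^*$ and \eqref{mptv3.14} by $\hat{A}\hat{B}$, simplify via Lemma~\ref{mptv4.1} and Theorem~\ref{ptv2.3} together with the hypothesis $(\hat{A}\hat{B})_e=\hat{A}\hat{B}$, and then invoke Theorem~\ref{mtvth3.5}. Your explicit justification of $(\hat{A}\hat{B})^\dag=(\hat{A}\hat{B})^N$ and of the auxiliary fact $(\hat{A}\hat{B})^*_e=(\hat{A}\hat{B})^*$ is a welcome addition the paper leaves implicit.
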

  \begin{proof}
      Post-multiplying \eqref{mptv3.13} by $(\hat{A}\hat{B})^*$, we get
            $$\hat{A}^N\hat{A}\hat{B}\hat{B}^*\hat{A}^*=\hat{B}\hat{B}^*\hat{A}^*.$$  
       
      Now, post-multiplying \eqref{mptv3.14} by $(\hat{A}\hat{B})$, we have
     $$\hat{B}\hat{B}^N\hat{A}^*\hat{A}\hat{B}=\hat{A}^*\hat{A}\hat{B}.$$
     Thus, by Theorem \ref{mtvth3.5}, we get $(\hat{A}\hat{B})^\dag=(\hat{A}\hat{B})^N=\hat{B}^N\hat{A}^N$.
  \end{proof}

  In the following theorem, we obtain another set of conditions for the reverse order law. 
  \begin{theorem}
       Let $\hat{A} \in \mathbb{D}\mathbb{C}^{m \times q}$ and $\hat{B} \in \mathbb{D}\mathbb{C}^{q \times n}$ be dual complex matrices. If 
 \begin{align}\label{mptv4.17}
     \hat{A}^N\hat{A}\hat{B}\hat{B}^N&=\hat{B}\hat{B}^N\hat{A}^N\hat{A},\\ \label{mptv4.18}  
     (\hat{A}\hat{B}\hat{B}^N\hat{A}^N)^*&=\hat{A}\hat{B}\hat{B}^N\hat{A}^N,\\ \label{mptv4.19}
     (\hat{B}^N\hat{A}^N\hat{A}\hat{B})^*&=\hat{B}^N\hat{A}^N\hat{A}\hat{B},
 \end{align}
 then $(\hat{A}\hat{B})^N=\hat{B}^N\hat{A}^N$.
 
   \end{theorem}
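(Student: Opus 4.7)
The plan is to verify the four defining conditions of the NDMPI (Definition \ref{def2.1}) directly for the candidate $\hat{X} = \hat{B}^N\hat{A}^N$ as the NDMPI of $\hat{A}\hat{B}$. Conditions \eqref{pt2.3} and \eqref{ptv2.4}, namely the Hermiticity of $(\hat{A}\hat{B})\hat{X}$ and $\hat{X}(\hat{A}\hat{B})$, are precisely the hypotheses \eqref{mptv4.18} and \eqref{mptv4.19}, so they are immediate. The real work lies in establishing \eqref{pt2.2} and \eqref{ptv2.1}, and the commutativity relation \eqref{mptv4.17} is the engine driving both.

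For \eqref{pt2.2}, I would expand
$$\hat{X}(\hat{A}\hat{B})\hat{X} \;=\; \hat{B}^N\bigl(\hat{A}^N\hat{A}\hat{B}\hat{B}^N\bigr)\hat{A}^N,$$
swap the middle bracket using \eqref{mptv4.17} to get $\hat{B}^N\bigl(\hat{B}\hat{B}^N\hat{A}^N\hat{A}\bigr)\hat{A}^N = (\hat{B}^N\hat{B}\hat{B}^N)(\hat{A}^N\hat{A}\hat{A}^N)$, and finally apply the Penrose identity \eqref{pt2.2} separately to $\hat{A}$ and $\hat{B}$ to collapse the expression to $\hat{B}^N\hat{A}^N = \hat{X}$.

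Condition \eqref{ptv2.1} is the main obstacle. Substituting \eqref{mptv4.17} directly into $\hat{A}\hat{B}\hat{X}\hat{A}\hat{B}$ yields $\hat{A}\hat{A}^N\hat{A}\hat{B}\hat{B}^N\hat{B} = \hat{A}_e\hat{B}_e$, but it is not clear that $\hat{A}_e\hat{B}_e$ coincides with $(\hat{A}\hat{B})_e$, and this mismatch is precisely where a naive verification stalls. The workaround is to replace \eqref{ptv2.1} by the equivalent characterization given in Theorem \ref{th2.2} (valid since \eqref{pt2.3}--\eqref{ptv2.4} already hold), that is, to verify
$$(\hat{A}\hat{B})^*(\hat{A}\hat{B})\hat{X}(\hat{A}\hat{B})(\hat{A}\hat{B})^* = (\hat{A}\hat{B})^*(\hat{A}\hat{B})(\hat{A}\hat{B})^*.$$
Expanding the left side to $\hat{B}^*\hat{A}^*\hat{A}\hat{B}\hat{B}^N\hat{A}^N\hat{A}\hat{B}\hat{B}^*\hat{A}^*$ and applying \eqref{mptv4.17} to the inner block $\hat{B}\hat{B}^N\hat{A}^N\hat{A}$ produces $\hat{B}^*(\hat{A}^*\hat{A}\hat{A}^N\hat{A})(\hat{B}\hat{B}^N\hat{B}\hat{B}^*)\hat{A}^*$. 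Now the two parenthesised blocks simplify via $\hat{A}^*\hat{A}\hat{A}^N\hat{A} = \hat{A}_e^*\hat{A} = \hat{A}^*\hat{A}$ and $\hat{B}\hat{B}^N\hat{B}\hat{B}^* = \hat{B}_e\hat{B}^* = \hat{B}\hat{B}^*$, both obtained by combining the Hermiticity of $\hat{A}\hat{A}^N$ and $\hat{B}\hat{B}^N$ with Lemma \ref{mptv4.1}(i)--(ii). The expression then collapses to $\hat{B}^*\hat{A}^*\hat{A}\hat{B}\hat{B}^*\hat{A}^* = (\hat{A}\hat{B})^*(\hat{A}\hat{B})(\hat{A}\hat{B})^*$, as required. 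Theorem \ref{th2.2} then delivers \eqref{ptv2.1}, completing the identification $(\hat{A}\hat{B})^N = \hat{B}^N\hat{A}^N$.
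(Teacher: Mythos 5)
Your proposal is correct and follows essentially the same route as the paper: both verify the four NDMPI conditions for $\hat{B}^N\hat{A}^N$ directly, take \eqref{mptv4.18}--\eqref{mptv4.19} as giving the Hermiticity conditions for free, obtain \eqref{pt2.2} by sandwiching \eqref{mptv4.17} between $\hat{B}^N$ and $\hat{A}^N$, and handle \eqref{ptv2.1} by passing to the equivalent identity $(\hat{A}\hat{B})^*(\hat{A}\hat{B})\hat{X}(\hat{A}\hat{B})(\hat{A}\hat{B})^*=(\hat{A}\hat{B})^*(\hat{A}\hat{B})(\hat{A}\hat{B})^*$ of Theorem \ref{th2.2}. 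Your explicit observation that the naive computation only yields $\hat{A}_e\hat{B}_e$ rather than $(\hat{A}\hat{B})_e$ — and that this is exactly why the detour through Theorem \ref{th2.2} is needed — is a point the paper leaves implicit.
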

  \begin{proof}
      We will show that $(\hat{A}\hat{B})^N=\hat{B}^N\hat{A}^N$ by the Definition of the NDMPI. The second and third assumptions confirm that the both $\hat{A}\hat{B}\hat{B}^N\hat{A}^N$ and $\hat{B}^N\hat{A}^N\hat{A}\hat{B}$ are Hermitian.
      
     Now, pre and post-multiplying \eqref{mptv4.17} by $(\hat{A}\hat{B})^*\hat{A}$ and $\hat{B}(\hat{B}\hat{A})^*$, we get
      $(\hat{A}\hat{B})^*\hat{A}\hat{B}\hat{B}^N\hat{A}^N\hat{A}\hat{B}(\hat{A}\hat{B})^*=(\hat{A}\hat{B})^*\hat{A}\hat{B}(\hat{A}\hat{B})^*,$
      which is equivalent to $$\hat{A}\hat{B}\hat{B}^N\hat{A}^N\hat{A}\hat{B}=(\hat{A}\hat{B})_{e}$$ 
      by Theorem \ref{th2.2}.
      Again, pre and post-multiplying \eqref{mptv4.17} by $\hat{B}^N$ and $\hat{A}^N$, we obtain 
$$\hat{B}^N\hat{A}^N\hat{A}\hat{B}\hat{B}^N\hat{A}^N=\hat{B}^N\hat{A}^N.$$
Thus, we have $$(\hat{A}\hat{B})^N=\hat{B}^N\hat{A}^N.$$
  \end{proof} 
 
    Next, we give a property of $\{1,2,3\}$-inverse of $\hat{A}$.
   
    \begin{lemma}\label{mptv3.18}
    Let $\hat{X} \in \hat{A}\{1,2,3\}$ for any $\hat{A} \in \mathbb{D}\mathbb{C}^{m \times n}$, then $\hat{A}\hat{X}=\hat{A}\hat{A}^N$.
\end{lemma}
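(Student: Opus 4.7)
The plan is to unpack the three defining properties of $\hat{X}\in\hat{A}\{1,2,3\}$, namely $\hat{A}\hat{X}\hat{A}=\hat{A}_{e}$, $\hat{X}\hat{A}\hat{X}=\hat{X}$, and $(\hat{A}\hat{X})^{*}=\hat{A}\hat{X}$, and combine them with the corresponding identities for $\hat{A}^{N}$, together with Lemma~\ref{mptv4.1}(iv). The key structural observation is that $\hat{A}\hat{X}$ is both Hermitian (given) and idempotent: pre-multiplying $\hat{X}\hat{A}\hat{X}=\hat{X}$ by $\hat{A}$ yields $(\hat{A}\hat{X})(\hat{A}\hat{X})=\hat{A}\hat{X}$. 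The same holds for $\hat{A}\hat{A}^{N}$ by the definition of the NDMPI.

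First I would produce one auxiliary identity from each side. Post-multiplying $\hat{A}\hat{X}\hat{A}=\hat{A}_{e}$ by $\hat{X}$ and using idempotence of $\hat{A}\hat{X}$ gives
\[
\hat{A}\hat{X} \;=\; \hat{A}_{e}\hat{X}.
\]
Independently, post-multiplying $\hat{A}\hat{X}\hat{A}=\hat{A}_{e}$ by $\hat{A}^{N}$ and invoking Lemma~\ref{mptv4.1}(iv), which states $\hat{A}_{e}\hat{A}^{N}=\hat{A}\hat{A}^{N}$, gives
\[
\hat{A}\hat{X}\hat{A}\hat{A}^{N} \;=\; \hat{A}\hat{A}^{N}.
\]

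Taking the conjugate transpose of this last equation and using the Hermitian properties $(\hat{A}\hat{X})^{*}=\hat{A}\hat{X}$ and $(\hat{A}\hat{A}^{N})^{*}=\hat{A}\hat{A}^{N}$ yields $\hat{A}\hat{A}^{N}\hat{A}\hat{X}=\hat{A}\hat{A}^{N}$. Combining with $\hat{A}_{e}=\hat{A}\hat{A}^{N}\hat{A}$ (the first NDMPI condition applied to $\hat{A}^{N}$),
\[
\hat{A}\hat{X} \;=\; \hat{A}_{e}\hat{X} \;=\; \hat{A}\hat{A}^{N}\hat{A}\hat{X} \;=\; \hat{A}\hat{A}^{N},
\]
which is the claim.

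I do not expect a real obstacle: this is a short bookkeeping lemma. The only point worth flagging is that one cannot simply cancel $\hat{A}$ on the right of $\hat{A}\hat{X}\hat{A}=\hat{A}_{e}$ (the right-hand side is $\hat{A}_{e}$, not $\hat{A}$, and $\hat{A}$ has no classical inverse in general). This forces the detour through conjugate-transposition, which converts the right-absorption property $\hat{A}\hat{X}\hat{A}\hat{A}^{N}=\hat{A}\hat{A}^{N}$ into the needed left-absorption property $\hat{A}\hat{A}^{N}\hat{A}\hat{X}=\hat{A}\hat{A}^{N}$ via the two Hermitian hypotheses.
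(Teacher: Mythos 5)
Your proof is correct and uses essentially the same ingredients as the paper's: the absorption identity $\hat{A}\hat{X}=\hat{A}_{e}\hat{X}$ obtained from conditions (1) and (2), Lemma~\ref{mptv4.1}(iv), and conjugate transposition via the Hermitian properties of $\hat{A}\hat{X}$ and $\hat{A}\hat{A}^{N}$. The only difference is the order in which the transpose is taken (the paper conjugates $\hat{A}\hat{X}=\hat{A}\hat{A}^{N}\hat{A}\hat{X}$ directly), which is an immaterial rearrangement of the same argument.
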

\begin{proof}
    Post-multiplying $\hat{A}\hat{A}^N\hat{A}=\hat{A}_{e}$ by $\hat{X}$, we get 
    \begin{equation}\label{mptv3.22}  
    \hat{A}\hat{X}=\hat{A}\hat{A}^N\hat{A}\hat{X}
    \end{equation} 
    as $\hat{A}\hat{X}\hat{A}=\hat{A}_{e}$ and $\hat{X}\hat{A}\hat{X}=\hat{X}$ implies $\hat{A}\hat{X}=\hat{A}_{e}\hat{X}$. Now, taking the conjugate transpose of equation \eqref{mptv3.22}, we obtain $\hat{A}\hat{X}=\hat{A}\hat{A}^N$.
\end{proof}
Similarly, $\hat{X}\hat{A}=\hat{A}^N\hat{A}$ for $\hat{X} \in \hat{A}\{1,2,4\}$. Next, we present equivalent conditions for the reverse order law using $\{1,2,3\}$-inverse.
\begin{theorem}
    Let $\hat{A} \in \mathbb{D}\mathbb{C}^{m \times q}$ and $\hat{B} \in \mathbb{D}\mathbb{C}^{q \times n}$. Then, $(\hat{A}\hat{B})^N=\hat{B}^N\hat{A}^N$ if and only if there exists a $\{1,2,3\}$-inverse $\hat{X}$ of $\hat{B}$ satisfying $(\hat{A}\hat{B})^N=\hat{B}^N\hat{B}\hat{X}\hat{A}^N$. 
\end{theorem}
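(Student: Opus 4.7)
My plan is to prove both directions of the biconditional directly from the four defining equations of the NDMPI (Definition 2.1) together with Lemma 3.18. The guiding observation is that $\hat{B}^N$ itself belongs to $\hat{B}\{1,2,3\}$, so it is the obvious candidate for $\hat{X}$ in the ``only if'' direction, while in the ``if'' direction Lemma 3.18 lets one collapse $\hat{B}\hat{X}$ to $\hat{B}\hat{B}^N$.

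For the forward direction, I would assume $(\hat{A}\hat{B})^N = \hat{B}^N\hat{A}^N$ and simply set $\hat{X} := \hat{B}^N$. By Definition 2.1, $\hat{B}^N$ satisfies all four NDMPI conditions for $\hat{B}$, and in particular equations \eqref{ptv2.1}, \eqref{pt2.2}, \eqref{pt2.3}; hence $\hat{X} \in \hat{B}\{1,2,3\}$. The second Penrose condition gives $\hat{B}^N\hat{B}\hat{B}^N = \hat{B}^N$, so
\[
\hat{B}^N\hat{B}\hat{X}\hat{A}^N \;=\; \hat{B}^N\hat{B}\hat{B}^N\hat{A}^N \;=\; \hat{B}^N\hat{A}^N \;=\; (\hat{A}\hat{B})^N,
\]
which is exactly the identity required.

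For the converse, suppose $\hat{X} \in \hat{B}\{1,2,3\}$ satisfies $(\hat{A}\hat{B})^N = \hat{B}^N\hat{B}\hat{X}\hat{A}^N$. The essential tool is Lemma \ref{mptv3.18}, which asserts $\hat{B}\hat{X} = \hat{B}\hat{B}^N$. Premultiplying this identity by $\hat{B}^N$ and using the second Penrose condition $\hat{B}^N\hat{B}\hat{B}^N = \hat{B}^N$ collapses $\hat{B}^N\hat{B}\hat{X}$ to $\hat{B}^N$. Substituting back into the hypothesis immediately gives $(\hat{A}\hat{B})^N = \hat{B}^N\hat{A}^N$.

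I do not anticipate any serious obstacle: once Lemma \ref{mptv3.18} is on the table, both directions amount to one-line manipulations. The only subtlety is to remember that here $\hat{B}\{1,2,3\}$ uses the modified first Penrose condition $\hat{B}\hat{X}\hat{B} = \hat{B}_e$ rather than $\hat{B}\hat{X}\hat{B} = \hat{B}$, so that Lemma \ref{mptv3.18} is applicable; this is automatic for the canonical choice $\hat{X} = \hat{B}^N$ in the forward direction and is built into the hypothesis in the converse.
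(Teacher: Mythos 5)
Your proof is correct and follows essentially the same route as the paper: the forward direction takes $\hat{X}=\hat{B}^N$ and uses $\hat{B}^N\hat{B}\hat{B}^N=\hat{B}^N$, while the converse applies Lemma \ref{mptv3.18} to replace $\hat{B}\hat{X}$ by $\hat{B}\hat{B}^N$ and then collapses the product. Your closing remark about the modified first Penrose condition being the one in force is a useful clarification that the paper leaves implicit.
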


\begin{proof}
    Suppose that there exists a $\{1,2,3\}$-inverse $\hat{X}$ of $\hat{B}$ satisfying $(\hat{A}\hat{B})^N=\hat{B}^N\hat{B}\hat{X}\hat{A}^N$. Then, by Lemma \ref{mptv3.18}, we have $(\hat{A}\hat{B})^N=\hat{B}^N\hat{A}^N$. Conversely, let $X=\hat{B}^N$. Then, $\hat{X}$ is $\{1,2,3\}$-inverse of $\hat{B}$ and $(\hat{A}\hat{B})^N=\hat{B}^N\hat{A}^N=\hat{B}^N\hat{B}\hat{B}^N\hat{A}^N=\hat{B}^N\hat{B}\hat{X}\hat{A}^N$.
\end{proof}
Analogously, one can prove that $(\hat{A}\hat{B})^N=\hat{B}^N\hat{A}^N$ if and only if there exists a $\{1,2,4\}$-inverse $X$ of $\hat{A}$ satisfying $(\hat{A}\hat{B})^N=\hat{B}^N\hat{X}\hat{A}\hat{A}^N$.

Next, we provide some sufficient conditions under which the forward order law holds for the NDMPI of dual complex matrices.
  \begin{theorem}
      Let $\hat{A},\hat{B} \in \mathbb{D}\mathbb{C}^{m \times m}$ be dual complex matrices. If \begin{align}\label{mptv4.20}
          \hat{A}^*\hat{A}(\hat{A}\hat{B})^N\hat{B}\hat{B}^*&=\hat{A}^*\hat{B}^*,\\ \label{mptv4.21}
          \hat{B}\hat{B}^N\hat{A}\hat{B}&=\hat{A}\hat{B},\\   \label{mptv4.22}
          \hat{A}^N\hat{A}\hat{B}^*\hat{A}^*&=\hat{B}^*\hat{A}^*,
          \end{align}
          then $(\hat{A}\hat{B})^N=\hat{A}^N\hat{B}^N$.
  \end{theorem}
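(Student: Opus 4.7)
The plan is to verify the identity $(\hat{A}\hat{B})^N = \hat{A}^N\hat{A}(\hat{A}\hat{B})^N\hat{B}\hat{B}^N = \hat{A}^N\hat{B}^N$ through two independent manipulations that exploit the characterizations $\hat{A}^N = (\hat{A}^*\hat{A})^N\hat{A}^* = \hat{A}^*(\hat{A}\hat{A}^*)^N$ furnished by Theorem \ref{ptv2.3}(3), and similarly for $\hat{B}$ and $\hat{A}\hat{B}$. The architecture is that hypothesis \eqref{mptv4.20} is the bridge carrying $(\hat{A}\hat{B})^N$ over to $\hat{A}^N\hat{B}^N$, while \eqref{mptv4.21} and \eqref{mptv4.22} serve only to absorb the leftover projector factors $\hat{A}^N\hat{A}$ and $\hat{B}\hat{B}^N$.

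First I would pre-multiply \eqref{mptv4.20} by $(\hat{A}^*\hat{A})^N$ and post-multiply by $(\hat{B}\hat{B}^*)^N$. Using $(\hat{A}^*\hat{A})^N\hat{A}^* = \hat{A}^N$ and $\hat{B}^*(\hat{B}\hat{B}^*)^N = \hat{B}^N$ from Theorem \ref{ptv2.3}(3), the left side collapses to $\hat{A}^N\hat{A}(\hat{A}\hat{B})^N\hat{B}\hat{B}^N$ and the right side to $\hat{A}^N\hat{B}^N$, giving the key identity $\hat{A}^N\hat{A}(\hat{A}\hat{B})^N\hat{B}\hat{B}^N = \hat{A}^N\hat{B}^N$. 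Next I would prove independently that $\hat{A}^N\hat{A}(\hat{A}\hat{B})^N\hat{B}\hat{B}^N$ is in fact equal to $(\hat{A}\hat{B})^N$. For the left factor, I expand $(\hat{A}\hat{B})^N = (\hat{A}\hat{B})^*[(\hat{A}\hat{B})(\hat{A}\hat{B})^*]^N = \hat{B}^*\hat{A}^*[(\hat{A}\hat{B})(\hat{A}\hat{B})^*]^N$ by Theorem \ref{ptv2.3}(3) and use hypothesis \eqref{mptv4.22}, $\hat{A}^N\hat{A}\hat{B}^*\hat{A}^* = \hat{B}^*\hat{A}^*$, to deduce $\hat{A}^N\hat{A}(\hat{A}\hat{B})^N = (\hat{A}\hat{B})^N$. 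Symmetrically, using the other form $(\hat{A}\hat{B})^N = [(\hat{A}\hat{B})^*(\hat{A}\hat{B})]^N(\hat{A}\hat{B})^*$ together with the conjugate transpose of \eqref{mptv4.21} — namely $\hat{B}^*\hat{A}^*\hat{B}\hat{B}^N = \hat{B}^*\hat{A}^*$, which follows from Hermiticity of $\hat{B}\hat{B}^N$ — yields $(\hat{A}\hat{B})^N\hat{B}\hat{B}^N = (\hat{A}\hat{B})^N$. Chaining these two cancellations gives $(\hat{A}\hat{B})^N = \hat{A}^N\hat{A}(\hat{A}\hat{B})^N\hat{B}\hat{B}^N$, and combining with the key identity above yields $(\hat{A}\hat{B})^N = \hat{A}^N\hat{B}^N$.

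The main obstacle is recognizing that Theorem \ref{ptv2.3}(3) — rather than a direct verification of the four defining conditions of Definition \ref{def2.1} — is the correct instrument. Each of the three hypotheses is awkward in isolation: \eqref{mptv4.20} contains $(\hat{A}\hat{B})^N$ itself, and \eqref{mptv4.21}, \eqref{mptv4.22} are one-sided absorption rules. But once the NDMPI is expressed through the two-factor formulas $(\cdot)^*[(\cdot)(\cdot)^*]^N$ and $[(\cdot)^*(\cdot)]^N(\cdot)^*$, all three hypotheses compose in a single clean chain, and the proof reduces to bookkeeping. No induction or case analysis on ranks is needed, and the NDMPI's uniqueness is invoked only implicitly through the Theorem \ref{ptv2.3}(3) representations.
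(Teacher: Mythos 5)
Your argument is correct. Note first that the paper states this theorem without any proof (it passes directly to the counterexample for the converse), so there is no in-text argument to compare against; your proposal in effect supplies the missing proof. Each step checks out: pre-multiplying \eqref{mptv4.20} by $(\hat{A}^*\hat{A})^N$ and post-multiplying by $(\hat{B}\hat{B}^*)^N$, together with the representations $\hat{A}^N=(\hat{A}^*\hat{A})^N\hat{A}^*$ and $\hat{B}^N=\hat{B}^*(\hat{B}\hat{B}^*)^N$ from Theorem~\ref{ptv2.3}(3), correctly yields $\hat{A}^N\hat{A}(\hat{A}\hat{B})^N\hat{B}\hat{B}^N=\hat{A}^N\hat{B}^N$; the absorption $\hat{A}^N\hat{A}(\hat{A}\hat{B})^N=(\hat{A}\hat{B})^N$ follows from \eqref{mptv4.22} applied to the factorization $(\hat{A}\hat{B})^N=\hat{B}^*\hat{A}^*[(\hat{A}\hat{B})(\hat{A}\hat{B})^*]^N$; and $(\hat{A}\hat{B})^N\hat{B}\hat{B}^N=(\hat{A}\hat{B})^N$ follows from the conjugate transpose of \eqref{mptv4.21} together with the Hermiticity of $\hat{B}\hat{B}^N$, which is condition \eqref{pt2.3} of Definition~\ref{def2.1} applied to $\hat{B}$. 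This is also entirely consistent with the techniques the paper does use in the neighbouring results (e.g.\ the proof that $(\hat{A}\hat{B})^N=\hat{A}^{-1}\hat{B}^N$, which likewise leans on Theorem~\ref{ptv2.3} and pre/post-multiplication), so your route is a natural completion rather than a departure.
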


  The next example shows that the converse of the above theorem is not true.
  \begin{example}
      Let $\hat{A}=\begin{bmatrix}
          1 & 0\\
          0 & \epsilon
      \end{bmatrix}$ and $\hat{B}=\begin{bmatrix}
          1 & 0\\
          0 & 1
      \end{bmatrix}$. Then, $(\hat{A}\hat{B})^N=\begin{bmatrix}
          1 & 0\\
          0 & 0
      \end{bmatrix}$ and $\hat{A}^N\hat{B}^N=\begin{bmatrix}
          1 & 0\\
          0 & 0
      \end{bmatrix}$. Thus, $(\hat{A}\hat{B})^N=\hat{A}^N\hat{B}^N$ but $\hat{A}^N\hat{A}\hat{B}^*\hat{A}^*=\begin{bmatrix}
          1 & 0\\
          0 & 0
      \end{bmatrix} \neq \begin{bmatrix}
          1 & 0\\
          0 & \epsilon
      \end{bmatrix}=\hat{B}^*\hat{A}^*$.
  \end{example}
  The following result presents a set of sufficient conditions for the equation \((\hat{A}\hat{B})^N = \hat{A}^{-1}\hat{B}^N\) to hold true.\\
 
  \begin{theorem}
      Let $\hat{A},\hat{B} \in \mathbb{D}\mathbb{C}^{m \times m}$ be dual complex matrices such that $\hat{A}$ is invertible. If 
      \begin{align}\label{mptv4.24}
          \hat{B}^N(\hat{A}\hat{B})\hat{B}^*&=\hat{A}\hat{B}^*,\\ \label{mptv4.25}
          \hat{B}\hat{B}^N\hat{A}\hat{B}&=\hat{A}\hat{B},\\ \label{mptv4.26}
          \hat{A}\hat{B}(\hat{A}\hat{B})^N\hat{B}&=\hat{B},
      \end{align} then $(\hat{A}\hat{B})^N=\hat{A}^{-1}\hat{B}^N$.
  \end{theorem}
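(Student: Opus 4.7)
The plan is to prove $(\hat{A}\hat{B})^N=\hat{A}^{-1}\hat{B}^N$ by invoking the uniqueness of the NDMPI (Definition~\ref{def2.1}). Setting $\hat{Y}:=\hat{A}^{-1}\hat{B}^N$, I would verify that $\hat{Y}$ satisfies the four defining equations for $\hat{A}\hat{B}$, namely $(\hat{A}\hat{B})\hat{Y}(\hat{A}\hat{B})=(\hat{A}\hat{B})_e$, $\hat{Y}(\hat{A}\hat{B})\hat{Y}=\hat{Y}$, $((\hat{A}\hat{B})\hat{Y})^*=(\hat{A}\hat{B})\hat{Y}$, and $(\hat{Y}(\hat{A}\hat{B}))^*=\hat{Y}(\hat{A}\hat{B})$; the claim then follows from uniqueness.

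First I would recast the three hypotheses in a form that removes the invertible factor $\hat{A}$. Pre-multiplying \eqref{mptv4.24} by $\hat{A}^{-1}$ gives $\hat{Y}\hat{A}\hat{B}\hat{B}^*=\hat{B}^*$. Post-multiplying this by $(\hat{B}^*)^N=(\hat{B}^N)^*$ (an identity obtained by taking conjugate transposes of the defining equations of the NDMPI for $\hat{B}$), and applying $\hat{B}^*(\hat{B}^N)^*=\hat{B}^N\hat{B}$ together with $\hat{B}\hat{B}^*(\hat{B}^N)^*=\hat{B}\hat{B}^N\hat{B}=\hat{B}_e$, yields $\hat{Y}\hat{A}\hat{B}_e=\hat{B}^N\hat{B}$. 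Post-multiplying by $\hat{B}^N$ and noting $\hat{B}_e\hat{B}^N=(\hat{B}\hat{B}^N)^2=\hat{B}\hat{B}^N$ gives the key identity
$$\hat{Y}\hat{A}\hat{B}\hat{B}^N=\hat{B}^N,$$
which I denote by $(\star)$. In parallel, taking the conjugate transpose of \eqref{mptv4.25} and using Hermiticity of $\hat{B}\hat{B}^N$ yields $\hat{B}^*\hat{A}^*=\hat{B}^*\hat{A}^*\hat{B}\hat{B}^N$, while pre-multiplying \eqref{mptv4.26} by $\hat{A}^{-1}$ gives $\hat{B}(\hat{A}\hat{B})^N\hat{B}=\hat{A}^{-1}\hat{B}$.

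With these tools in hand, I would verify the four conditions in the order (iii), (iv), (i), (ii). For (iii), Hermiticity of $(\hat{A}\hat{B})\hat{Y}=\hat{A}\hat{B}\hat{A}^{-1}\hat{B}^N$ would follow by taking its conjugate transpose and applying the consequence $\hat{Y}\hat{A}\hat{B}\hat{B}^*=\hat{B}^*$ of \eqref{mptv4.24}. For (iv), Hermiticity of $\hat{Y}(\hat{A}\hat{B})=\hat{A}^{-1}\hat{B}^N\hat{A}\hat{B}$ is established using the adjoint reformulation of \eqref{mptv4.25} together with $(\hat{B}^N)^*=(\hat{B}^*)^N$. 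For (i), rather than verifying $(\hat{A}\hat{B})\hat{Y}(\hat{A}\hat{B})=(\hat{A}\hat{B})_e$ directly, I would apply Theorem~\ref{th2.2} to replace it (under the already-verified Hermiticity conditions) by the equivalent $(\hat{A}\hat{B})^*(\hat{A}\hat{B})\hat{Y}(\hat{A}\hat{B})(\hat{A}\hat{B})^*=(\hat{A}\hat{B})^*(\hat{A}\hat{B})(\hat{A}\hat{B})^*$; substituting $\hat{Y}$ and invoking \eqref{mptv4.24} in the form $\hat{B}^N\hat{A}\hat{B}\hat{B}^*\hat{A}^*=\hat{A}\hat{B}^*\hat{A}^*$ collapses the two sides. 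For (ii), I would substitute $\hat{A}\hat{B}=\hat{B}\hat{B}^N\hat{A}\hat{B}$ from \eqref{mptv4.25} into $\hat{Y}(\hat{A}\hat{B})\hat{Y}$, and then use $(\star)$, \eqref{mptv4.26}, and the identity $\hat{B}^N\hat{B}\hat{B}^N=\hat{B}^N$ to reduce the expression to $\hat{Y}$.

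The principal obstacle is the persistent distinction between $\hat{B}$ and its essential part $\hat{B}_e=\hat{B}\hat{B}^N\hat{B}$: the NDMPI identities for $\hat{B}$ naturally produce $\hat{B}_e$, whereas the hypotheses are stated in terms of $\hat{B}$ itself. Hypothesis \eqref{mptv4.25} is the indispensable bridge, enforcing compatibility between $\hat{A}\hat{B}$ and the Hermitian projection $\hat{B}\hat{B}^N$; this compatibility is what allows the verification of condition (ii) to succeed, since otherwise $\hat{Y}(\hat{A}\hat{B})\hat{Y}$ would fail to collapse back to $\hat{Y}$. Secondary care is required when handling the infinitesimal $\epsilon$-parts intrinsic to NDMPI and when invoking $(\hat{B}^N)^*=(\hat{B}^*)^N$.
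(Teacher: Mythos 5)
Your overall strategy---set $\hat{Y}=\hat{A}^{-1}\hat{B}^N$, verify the four defining equations for $\hat{A}\hat{B}$, and invoke uniqueness of the NDMPI---is legitimate in principle and genuinely different from the paper, which never touches the four Penrose conditions: it instead starts from \eqref{mptv4.24}, sandwiches it by $\hat{A}^{-1}$ and $\hat{A}^*$ to get $(\hat{A}\hat{B})^*=\hat{A}^{-1}\hat{B}^N\hat{A}\hat{B}(\hat{A}\hat{B})^*$, post-multiplies by $\bigl(\hat{A}\hat{B}(\hat{A}\hat{B})^*\bigr)^N$ and uses Theorem \ref{ptv2.3}(3) to obtain $(\hat{A}\hat{B})^N=\hat{A}^{-1}\hat{B}^N\hat{A}\hat{B}(\hat{A}\hat{B})^N$, and then closes with \eqref{mptv4.25}--\eqref{mptv4.26}. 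Your preliminary derivations (the identity $\hat{Y}\hat{A}\hat{B}\hat{B}^*=\hat{B}^*$, the identity $(\star)$, and the verification of condition (i) via Theorem \ref{th2.2} and $\hat{B}^N\hat{A}\hat{B}(\hat{A}\hat{B})^*=\hat{A}(\hat{A}\hat{B})^*$) are sound.

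The gap is in conditions (ii), (iii) and (iv), which you assert in one line each but which do not close with the tools you cite. For (ii) you must show $\hat{B}^N\hat{A}\hat{B}\hat{A}^{-1}\hat{B}^N=\hat{B}^N$; your identity $(\star)$ controls $\hat{Y}\hat{A}\hat{B}\hat{B}^N$, but here the factor $\hat{A}^{-1}$ sits between $\hat{B}$ and $\hat{B}^N$ and neither $(\star)$ nor \eqref{mptv4.25} removes it, while \eqref{mptv4.26} involves $(\hat{A}\hat{B})^N$ rather than $\hat{Y}$ --- you cannot substitute one for the other before the theorem is proved, so the expression does not ``collapse back to $\hat{Y}$'' as claimed. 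For (iii), taking the conjugate transpose of $\hat{A}\hat{B}\hat{A}^{-1}\hat{B}^N$ gives $(\hat{B}^N)^*(\hat{A}^*)^{-1}\hat{B}^*\hat{A}^*$, and applying $\hat{Y}\hat{A}\hat{B}\hat{B}^*=\hat{B}^*$ to this does not visibly produce $\hat{A}\hat{B}\hat{A}^{-1}\hat{B}^N$; similarly (iv) reduces to the unproved identity $\hat{B}^N\hat{A}\hat{B}=\hat{A}\hat{B}^N\hat{B}$. Moreover, since your route to (i) goes through Theorem \ref{th2.2}, which presupposes (iii) and (iv), the gap propagates to (i) as well. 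The clean way out is exactly the paper's: post-multiply $\hat{Y}(\hat{A}\hat{B})(\hat{A}\hat{B})^*=(\hat{A}\hat{B})^*$ by $\bigl((\hat{A}\hat{B})(\hat{A}\hat{B})^*\bigr)^N$ to get $(\hat{A}\hat{B})^N=\hat{Y}\hat{A}\hat{B}(\hat{A}\hat{B})^N$, then post-multiply by $\hat{B}\hat{B}^N$ so that \eqref{mptv4.26} and \eqref{mptv4.25} finish the computation directly, with no need to certify the four conditions.
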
 
  \begin{proof}
      Pre and post-multiplying \eqref{mptv4.24} by $\hat{A}^{-1}$ and $\hat{A}^*$, we get 
      \begin{equation}\label{eq3.32}
          (\hat{A}\hat{B})^*=\hat{A}^{-1}\hat{B}^N\hat{A}\hat{B}(\hat{A}\hat{B})^*.
       \end{equation}   
      Post-multiplying \eqref{eq3.32} by $(\hat{A}\hat{B}(\hat{A}\hat{B})^*)^N$ and using Theorem \ref{ptv2.3}, we have 
     \begin{equation}\label{eq3.23} (\hat{A}\hat{B})^N=\hat{A}^{-1}\hat{B}^N\hat{A}\hat{B}(\hat{A}\hat{B})^N.    
        \end{equation}   
   Again, post-multiplying \eqref{eq3.23} by $\hat{B}\hat{B}^N$ and using equation \eqref{mptv4.25} and \eqref{mptv4.26}, we get
        \begin{equation}\label{eq3.24}(\hat{A}\hat{B})^N=\hat{A}^{-1}\hat{B}^N.
        \end{equation}
    
  \end{proof}
     
   \section*{Data Availability Statements}

 Data sharing is not applicable to this manuscript as no datasets were generated or analyzed during the current study.

 \section*{Conflicts of interest}

 The authors declare that they have no conflict of interest.

 \section*{Acknowledgements}

The first author acknowledges the support of the National Institute of Technology Raipur, India.

\end{document}